\newtheorem{thm}{Theorem}[section]
\newtheorem{corollary}[thm]{Corollary}
\newtheorem{lemma}[thm]{Lemma}
\newtheorem{assumption}[thm]{Assumption}
\theoremstyle{remark}
\newtheorem{remark}[thm]{Remark}
\def\Xint#1{\mathchoice
{\XXint\displaystyle\textstyle{#1}}%
{\XXint\textstyle\scriptstyle{#1}}%
{\XXint\scriptstyle\scriptscriptstyle{#1}}%
{\XXint\scriptscriptstyle\scriptscriptstyle{#1}}%
\!\int}
\def\XXint#1#2#3{{\setbox0=\hbox{$#1{#2#3}{\int}$}
\vcenter{\hbox{$#2#3$}}\kern-.5\wd0}}
\def\dashint{\Xint-}
\newcommand\cbrk{\text{$]$\kern-.15em$]$}}
\newcommand\opar{\text{\,\raise.2ex\hbox{${\scriptstyle
|}$}\kern-.34em$($}}
\newcommand\cpar{\text{$)$\kern-.34em\raise.2ex\hbox{${\scriptstyle |}$}}\,}
\def\<{\langle}
\def\>{\rangle}
\newcommand\bR{\mathbb{R}}
\newcommand\bM{\mathbb{M}}
\newcommand\bN{\mathbb{N}}
\newcommand\fR{\mathbf{R}}
\newcommand\cA{\mathcal{A}}
\newcommand\cF{\mathcal{F}}
\newcommand\cG{\mathcal{G}}
\newcommand\cI{\mathcal{I}}
\newcommand\cJ{\mathcal{J}}
\newcommand\cL{\mathcal{L}}
\newcommand\cM{\mathcal{M}}
\newcommand{\mysection}[1]{\section{#1}
\setcounter{equation}{0}}
\begin{document}

\title[BMO estimates for pseudo-differential operators]
{Parabolic BMO estimates  for  pseudo-differential operators of arbitrary order}

\author{Ildoo Kim}
\address{Department of Mathematics, Korea University, 1 Anam-dong, Sungbuk-gu, Seoul,
136-701, Republic of Korea} \email{waldoo@korea.ac.kr}

\author{Kyeong-Hun Kim}
\address{Department of Mathematics, Korea University, 1 Anam-dong,
Sungbuk-gu, Seoul, 136-701, Republic of Korea}
\email{kyeonghun@korea.ac.kr}
\thanks{This work was supported by Samsung Science  and Technology Foundation under Project Number SSTF-BA1401-02}

\author{Sungbin Lim}
\address{Department of Mathematics, Korea University, 1 Anam-dong, Sungbuk-gu, Seoul,
136-701, Republic of Korea} \email{sungbin@korea.ac.kr}

\subjclass[2010]{35S10, 35K30, 35B45, 35B05}

\begin{abstract}
In this article we prove the BMO-$L_{\infty}$ estimate
$$
\|(-\Delta)^{\gamma/2} u\|_{BMO(\fR^{d+1})}\leq N \|\frac{\partial}{\partial t}u-A(t)u\|_{L_{\infty}(\fR^{d+1})}, \quad \forall\, u\in C^{\infty}_c(\fR^{d+1})
$$
 for a wide class of pseudo-differential operators $A(t)$ of order $\gamma\in (0,\infty)$.
The coefficients of $A(t)$ are assumed to be merely measurable in time variable. As an application to the equation
$$
\frac{\partial}{\partial t}u=A(t)u+f,\quad t\in \fR
$$
we prove that for any $u\in C^{\infty}_c(\bR^{d+1})$
$$
\|u_t\|_{L_p(\fR^{d+1})}+\|(-\Delta)^{\gamma/2}u\|_{L_p(\fR^{d+1})}\leq N\|u_t-A(t)u\|_{L_p(\fR^{d+1})},
$$
where $p \in (1,\infty)$ and the constant $N$ is independent of $u$.
\end{abstract}

\maketitle

\mysection{Introduction}

It is a classical result that if a   second-order operator $A(t)u=a^{ij}(t)u_{x^ix^j}$ fulfills the uniform ellipticity
$$
\delta |\xi|^2 \leq a^{ij}(t)\xi^i\xi^j \leq \delta^{-1}|\xi|^2, \quad \delta>0
$$
 then it holds that for any $p>1$ and $u\in C^{\infty}_c(\fR^{d+1})$
\begin{align}
                \label{int 1}
\|\Delta u\|_{L_p(\fR^{d+1})} \leq c(\delta,p) \|u_t-A(t)u\|_{L_p(\fR^{d+1})}.
\end{align}
If $a^{ij}(t)$ are smooth  enough, then (\ref{int 1}) can be  obtained by using the  multiplier theory. The classical multiplier theory is not applicable if  $a^{ij}(t)$ are merely measurable in $t$. In this case one can rely on either Carlder\'on-Zygmund theory (see \cite{Kr01c}) or
the approach   based on the sharp  function estimate of  $\Delta u$ (see \cite{KrPbook}).

In this article we extend (\ref{int 1}) to a  wide  class of arbitrary order pseudo-differential operators $A(t)$ with measurable coefficients based on a BMO-$L_{\infty}$ estimate. More precisely we prove
\begin{equation}
                \label{real mainresult}
\|(-\Delta)^{\gamma/2}u\|_{BMO(\fR^{d+1})}\leq N \|u_t-A(t)u\|_{L_{\infty}(\fR^{d+1})}, \quad \forall u\in C^{\infty}_c(\fR^{d+1})
\end{equation}
under the condition that  there exist constants  $\nu, \gamma>0$ so that for the symbol $\psi(t,\xi)$ of $A(t)$ (i.e. $\cF (A(t)u)(\xi)=\psi(t,\xi)\cF(u)(\xi)$)
it holds that
\begin{equation}
           \label{main con1}
    \Re [\psi(t,\xi)]  \leq - \nu |\xi|^\gamma, \quad \forall \, \xi\in \fR^d\setminus\{0\}
\end{equation}
and for any multi-index $|\alpha| \leq \lfloor \frac{d}{2} \rfloor + 1$
\begin{equation}
             \label{main con2}
 |D^\alpha \psi(t,\xi)| \leq \nu^{-1} |\xi|^{\gamma -|\alpha|}, \quad \forall\, \xi\in \fR^d\setminus\{0\}.
\end{equation}
 Based on the  Marcinkiewicz's interpolation theorem and (\ref{real mainresult})  we prove  a generalization of (\ref{int 1}), that is
\begin{equation}
           \label{eqn 7.19.2}
           \|u_t\|_{L_p(\bR^{d+1})}+\|(-\Delta)^{\gamma/2}u\|_{L_p(\bR^{d+1})}\leq N\|u_t-A(t)u\|_{L_p(\bR^{d+1})}, \quad p>1.
           \end{equation}
Using (\ref{eqn 7.19.2}) one can obtain the unique solvability  of the Cauchy problem
$$
u_t=A(t)u+f, \quad t>0\,; \quad u(0,\cdot)=0
$$
in an appropriate $L_p$-space.

  Here are some examples of operators $A(t)$ satisfying  conditions (\ref{main con1}) and (\ref{main con2}). If $A(t)=(-1)^{m-1}\sum_{|\alpha|=|\beta|=m} a^{\alpha \beta}(t) D^{\alpha+\beta}$  is a $2m$-order differential operator then the symbol
$\psi(t,\xi)=(-1)^m \sum_{|\alpha|=|\beta|=m} a^{\alpha \beta}(t)\xi^{\alpha}\xi^{\beta}$ satisfies (\ref{main con1}) and (\ref{main con2}) if $a^{\alpha \beta}(t)$ are bounded complex-valued measurable functions satisfying
$$
\nu |\xi|^{2m} \leq  \sum_{|\alpha|=|\beta|=m}  \xi^\alpha \xi^\beta \Re\left[a^{\alpha \beta}(t)\right].
$$
Our results   cover the operators of the type
$$
A(t)u=\int_{\fR^d} \Big(u(t,x+y)-  u(t,x)-\chi(y) (u (t,x),y) \Big) m(t,y)
\frac{dy}{|y|^{d+\gamma}}
$$
where $\chi(y)=I_{\gamma>1}+I_{\gamma=1}I_{|y|\leq 1}$ and $m(t,y)$ is a nonnegative measurable function satisfying appropriate conditions. See Section \ref{appl} for details and further examples. The issue regarding the compositions and powers of operators is also discussed in Section \ref{appl}. In particular, for any operators $A_1(t)$ and $A_2(t)$  satisfying the prescribed conditions and   constants $a,b>0$, the operator $C(t)=-(-A_1)^a(-A_2)^b$ satisfies the conditions if for instance the symbols of $A_i(t)$ are real-valued.

Actually in this article we prove a   generalized version of (\ref{real mainresult}). We introduce an optimal condition on the kernel $K(t,s,x)$ (see Assumptions \ref{as 1} and \ref{as 2})  so that the inequality
\begin{equation}
           \label{7.2}
\left\|\int^t_{-\infty}\int_{\fR^d}K(t,s,x-y)f(s,y)dyds\right\|_{BMO(\fR^{d+1})}\leq N\|f\|_{L_{\infty}(\fR^{d+1})}
\end{equation}
holds for any  $f\in C^{\infty}_c(\fR^{d+1})$ with constant $N$ independent of $f$. It turns out that if $A(t)$ is an operator with the symbol $\psi(t,\xi)$ satisfying (\ref{main con1}) and (\ref{main con2}) then the kernel $K(t,s,x)$ related to the formula
$$
(-\Delta)^{\gamma/2}u=\int^t_{-\infty} \int_{\fR^d} K(t,s,x-y)f(s,y)dyds, \quad f:=u_t-A(t)u
$$
satisfies our restrictions on the kernel, that is Assumptions \ref{as 1} and \ref{as 2}.

Below is a short description on related works. In the setting of elliptic equations, the BMO-$L^{\infty}$ estimate
\begin{align}
                \label{int 2}
\|K*f\|_{BMO(\fR^d)} \leq N \|f\|_{L_\infty(\fR^d)}
\end{align}
has been well studied
with Calder\'on-Zygmund kernel $K$. See, for instance,  \cite {Gr}. It seems that the tools used in the literature to prove (\ref{int 2}) are not efficient for  parabolic equations.
Beyond BMO-$L^\infty$  estimate, when it comes to elliptic equations, BMO-BMO type estimates have been obtained in quite general setting
(see, for instance, \cite{BB}, \cite{DM}, and \cite{DKS}).
However, to the best of our knowledge,  there is no BMO-$L^{\infty}$ or BMO-BMO  type estimate  for parabolic equations. We only mention that the sharp function estimate of the type
$$
(A(t)u)^{\sharp}(t,x)\leq \varepsilon [\bM (A(t)u)^2]^{1/2} (t,x) + N(\varepsilon) [\bM (u_t-A(t)u)^2]^{1/2}(t,x), \quad \varepsilon>0
$$
for  parabolic equations is introduced e.g. in   \cite{KrPbook} (second order) and \cite{DKh}  ($2m$-order, $m\in \bN$). Here $h^{\sharp}$ and $\bM h$ represent the sharp function and maximal function of $h$ respectively.

To prove (\ref{7.2}), in place  of the duality property of Hardy space $H^1$ typically used in the literature to prove (\ref{int 2}), we employ  only direct computations on the basis of properties of kernels.

Finally we introduce some notation used in the article. As usual $\fR^{d}$ stands for the Euclidean space of points
$x=(x^{1},...,x^{d})$,  $B_r(x) := \{ y\in \fR^d : |x-y| < r\}$  and
$B_r
 :=B_r(0)$.
 For  multi-indices $\alpha=(\alpha_{1},...,\alpha_{d})$,
$\alpha_{i}\in\{0,1,2,...\}$, $x \in \fR^d$, and  functions $u(x)$ we set
$$
 u_{x^{i}}=\frac{\partial u}{\partial x^{i}}=D_{i}u,\quad \quad
D^{\alpha}u=D_{1}^{\alpha_{1}}\cdot...\cdot D^{\alpha_{d}}_{d}u,
$$
$$
x^\alpha = (x^1)^{\alpha_1} (x^2)^{\alpha_2} \cdots (x^d)^{\alpha_d},\quad \quad
|\alpha|=\alpha_{1}+\cdots+\alpha_{d}.
$$
We also use $D^m_x$ to denote a partial derivative of order $m$  with respect to $x$.
For an open set $U \subset \fR^d$ and a nonnegative integer $n$, we write $u \in C^n(U)$  if $u$ is $n$ times continuously differentiable in $U$.
By $C^{\infty}_c(U)$  we denote the set of infinitely differentiable  functions with compact support in $U$.
The standard $L_p$-space on $U$ with Lebesgue measure is denoted by $L_p(U)$.
We use  ``$:=$" to denote a definition. $\lfloor a \rfloor$ is the biggest integer which is less than or equal to $a$.
By $\cF$ and $\cF^{-1}$ we denote the d-dimensional Fourier transform and the inverse Fourier transform, respectively. That is,
$\cF(f)(\xi) := \int_{\fR^{d}} e^{-i x \cdot \xi} f(x) dx$ and $\cF^{-1}(f)(x) := \frac{1}{(2\pi)^d}\int_{\fR^{d}} e^{ i\xi \cdot x} f(\xi) d\xi$.
For a Borel
set $X\subset \fR^d$, we use $|X|$ to denote its Lebesgue
measure and by $I_X(x)$ we denote  the indicator of $A$.

\mysection{Main results}

Fix $\gamma >0$ throughout this article.
For a locally integrable function $h$ on  $\fR^{d+1}$, we define the BMO semi-norm of $h$ on $\fR^{d+1}$ as follows :
\begin{align*}
\|h\|_{BMO(\fR^{d+1})} = \sup_{Q }  \frac{1}{|Q|}  \int_{Q} |h(r,z) - h_{Q}|~drdz,
\end{align*}
where $f_{Q} := \frac{1}{|Q|}  \int_{Q} f(r,z)~drdz$ and the sup is taken all $Q$  of the type
$$
Q=Q_c(t_0,x_0):= (t_0-c^\gamma, t_0+c^\gamma) \times B_c(x_0) , \quad c>0,\,(t_0,x_0)\in \fR^{d+1}.
$$

Let  $K$ be a  measurable function defined on $\fR^{d+2}$ so that  $K(t,s,\cdot)$ is integrable for each $s<t$.   Denote
$$
\hat K(t,s, \xi) = \cF \Big(K(t,s,\cdot) \Big) (\xi),
$$
where $\cF$ denotes the Fourier transform on $\fR^d$.

\begin{assumption}      \label{as 1}
There exists a measurable function $H$ on $\fR^{d+1}$ such that
 for all $t>s$ and $\xi \in \fR^d$,
\begin{align}           \label{as 11}
|\hat K (t,s,\xi)| \leq    H (t-s,\xi)
\end{align}
and
\begin{align}           \label{as 12}
\sup_{\xi} \int_{0}^\infty  H(t,\xi) dt <\infty.
\end{align}

\end{assumption}

\begin{assumption}      \label{as 2}
There exists a nondecreasing function $\varphi(t):(0,\infty)\to [0,\infty)$ such that

(i) for any $s>r$  and $c>0$,

\begin{align}           \label{as 21}
\int_{r}^s \int_{|z| \geq c} |K(s,\tau, z)| ~dz d\tau \leq \varphi((s-r)c^{-\gamma})\,;
\end{align}

  (ii) for any $s>r>a$,
\begin{align}                   \label{as 32}
\int_{-\infty}^{a} \int_{\fR^d} | K(s,\tau, z) - K(r,\tau, z)| ~dz d\tau \leq \varphi((s-r)(r-a)^{-1});
\end{align}

  (iii) for any $s > a $ and $ h \in \fR^d$,
\begin{align}               \label{as 31}
\int_{-\infty}^{a} \int_{\fR^d} | K(s,\tau, z+h)-K(s,\tau, z)| ~dz d\tau \leq \varphi\big( |h|(s-a)^{-1/\gamma} \big).
\end{align}
\end{assumption}

Note that
\begin{align*}
&\int_{-\infty}^{a} \int_{\fR^d} | K(s,\tau, z+h)-K(s,\tau, z)| ~dz d\tau \\
&=\int_{s-a}^{\infty} \int_{\fR^d} | K(s,s-\tau, z+h)-K(s,\tau, z)| ~dz d\tau.
\end{align*}

Thus, if $K(s,\tau,z)=K(s-\tau,z)$ then (\ref{as 31}) is equivalent to
\begin{align*}
\int_{b}^{\infty} \int_{\fR^d} | K(\tau, z+h)-K(\tau, z)| ~dz d\tau \leq \varphi\big( |h|b^{-1/\gamma} \big).
\end{align*}

For a function $f$ on $\fR^{d+1}$, denote
\begin{align}
                \label{g definition}
\cG  f(t,x) :=  \int_{-\infty}^t K(t,s, \cdot) \ast f(s, \cdot) (x)~ds.
\end{align}
\begin{remark}
                    \label{well define g}
If $f$ has compact support and is regular enough with respect $x$, then $\cG f$ is well defined.
For instance, one can check that if $f\in C^{\infty}_c(\fR^{d+1})$ then for any multi-index $\alpha$,
$$
\sup_{s,\xi}|\xi^{\alpha}\hat{f}(s,\xi)|=\sup_{s,\xi}|\cF\big(D^{\alpha}f(s,\cdot)\big)(\xi)|<\infty.
$$
 Therefore $\sup_s |\hat{f}(s,\xi)|\in L_1(\fR^d)$ and from \eqref{as 12},
\begin{align*}
\int^t_{-\infty} |K(t,s,\cdot)\ast f(s,\cdot)(x)|ds
&= \int^t_{-\infty}|\cF^{-1}(\hat{K}(t,s,\xi)\hat{f}(s,\xi))(x)|ds\\
&\leq \int^t_{-\infty}\int_{\fR^d}H(t-s,\xi)|\hat{f}(s,\xi)|\,d\xi ds\\
&= \int_{\fR^d}|\sup_s \hat{f}(s,\xi)| \left(\sup_{\xi}\int^{\infty}_{0} H(t,\xi)dt\right)\,d\xi<\infty.
\end{align*}
It follows that $\cG f$ is well defined for  functions $f\in C^{\infty}_c(\fR^{d+1})$.
\end{remark}

Theorems \ref{BMO theorem} and \ref{pseudo thm} below are our main results. The proofs of the theorems are given in Sections \ref{proof BMO theorem}  and  \ref{proof pseudo thm}.

\begin{thm}
                    \label{BMO theorem}
 Let  Assumptions \ref{as 1} and  \ref{as 2} hold and $p \in [2,\infty)$.
Then for any $f\in C^{\infty}_c(\fR^{d+1})$ it holds that

\begin{equation}
                     \label{bmo part1}
\|\cG f \|_{BMO(\fR^{d+1})} \leq N \|f\|_{L_\infty(\fR^{d+1})}
\end{equation}
and
\begin{equation}
                 \label{bmo part2}
\|\cG f \|_{L_p( \fR^{d+1} )} \leq N \|f\|_{L_p( \fR^{d+1} )},
\end{equation}
where the constant  $N$ depends only on $d$, $p$, and the constants in the assumptions.
\end{thm}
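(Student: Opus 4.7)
My plan is to split the proof into three stages: first, a Plancherel-based $L_2$ bound that relies solely on Assumption \ref{as 1}; second, the BMO estimate \eqref{bmo part1}, obtained by decomposing $f$ relative to a fixed parabolic cylinder and separately handling a ``near'' piece via the $L_2$ bound, a ``spatially far'' piece via the tail condition \eqref{as 21}, and a ``temporally far'' piece via the regularity conditions \eqref{as 32} and \eqref{as 31}; third, the $L_p$ bound for $p\in(2,\infty)$ by interpolation between the two endpoints.

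For the $L_2$ step, applying the spatial Fourier transform gives $\widehat{\cG f}(t,\xi)=\int_{-\infty}^{t}\hat K(t,s,\xi)\hat f(s,\xi)\,ds$, and \eqref{as 11} together with Young's convolution inequality in the time variable yields
$$
\|\widehat{\cG f}(\cdot,\xi)\|_{L_2(\fR)} \leq \Bigl(\sup_{\eta}\int_0^\infty H(r,\eta)\,dr\Bigr)\|\hat f(\cdot,\xi)\|_{L_2(\fR)}.
$$
Squaring, integrating in $\xi$, and invoking Plancherel together with \eqref{as 12} gives $\|\cG f\|_{L_2(\fR^{d+1})}\leq N\|f\|_{L_2(\fR^{d+1})}$, which in particular settles the $p=2$ case of \eqref{bmo part2}.

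For the BMO step, fix $Q=Q_c(t_0,x_0)$ and write $f=f_1+f_2+f_3$ with $f_1=f\cdot I_{(t_0-2c^\gamma,t_0+c^\gamma)\times B_{2c}(x_0)}$, $f_2=f\cdot I_{(t_0-2c^\gamma,t_0+c^\gamma)\times B_{2c}(x_0)^c}$, and $f_3=f\cdot I_{(-\infty,t_0-2c^\gamma]\times\fR^d}$. For $\cG f_1$, Jensen's inequality and the $L_2$ bound above give
$$
\frac{1}{|Q|}\int_Q |\cG f_1-(\cG f_1)_Q|\,dtdx \leq 2\Bigl(\frac{1}{|Q|}\|\cG f_1\|_{L_2}^2\Bigr)^{1/2} \leq \frac{N}{|Q|^{1/2}}\|f_1\|_{L_2} \leq N\|f\|_{L_\infty(\fR^{d+1})},
$$
since $\|f_1\|_{L_2}^2\leq N|Q|\|f\|_{L_\infty}^2$. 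For $\cG f_2$, the geometric fact that $|x-y|\geq c$ whenever $x\in B_c(x_0)$ and $y\notin B_{2c}(x_0)$, combined with \eqref{as 21}, gives the pointwise bound $|\cG f_2(t,x)|\leq \varphi(3)\|f\|_{L_\infty}$ on $Q$. For $\cG f_3$, writing
$$
\cG f_3(t,x)-\cG f_3(t',x') = \int_{-\infty}^{t_0-2c^\gamma}\int_{\fR^d}\bigl[K(t,s,x-y)-K(t',s,x'-y)\bigr]f(s,y)\,dyds
$$
and splitting the integrand into a pure time-increment and a pure space-increment, applications of \eqref{as 32} and \eqref{as 31} with $a=t_0-2c^\gamma$ give $|\cG f_3(t,x)-\cG f_3(t',x')|\leq 2\varphi(2)\|f\|_{L_\infty}$ throughout $Q\times Q$; here the cushion factor $2$ in the definition of $f_3$ ensures that the rescaled quantities $(t-t')/(t'-a)$ and $|x-x'|(t'-a)^{-1/\gamma}$ stay bounded by $2$. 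Summing the three contributions yields \eqref{bmo part1}.

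For $p\in(2,\infty)$, the two endpoint bounds $\cG\colon L_2\to L_2$ and $\cG\colon L_\infty\to BMO(\fR^{d+1})$ together with Marcinkiewicz--Stampacchia interpolation (equivalently, the Fefferman--Stein sharp-function machinery) give $\cG\colon L_p\to L_p$, completing \eqref{bmo part2}. The main technical obstacle I anticipate is the $\cG f_3$ estimate: one must carefully match the parabolic scaling $c^\gamma$ in time to $c$ in space so that the rescaled increments in \eqref{as 32}--\eqref{as 31} remain $O(1)$ after the time-cushion, and cleanly separate the combined kernel increment into independent time and space differences without losing the $s$-tail integrability on $(-\infty,t_0-2c^\gamma]$.
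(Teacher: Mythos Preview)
Your proposal is correct and follows the same overall strategy as the paper: an $L_2$ bound from Assumption~\ref{as 1}, a BMO bound by decomposing $f$ relative to a fixed cylinder and invoking Assumption~\ref{as 2}, and then $L_p$ via the sharp-function/Fefferman--Stein machinery. The only organizational difference is in the BMO step. The paper uses a two-piece split $f=f\zeta+f(1-\zeta)$ with a \emph{smooth} cutoff $\zeta$ equal to $1$ on $Q_{2c}$ and supported in $Q_{3c}$; the near piece is handled by the $L_2$ bound (Corollary~\ref{inside sup est}), and for the far piece $f(1-\zeta)$ the paper compares $\cG f(s,y)$ to the fixed reference value $\cG f(t_0-c^\gamma,0)$ and splits the \emph{time integral} at $t_0-(2c)^\gamma$ to separate the contributions governed by \eqref{as 21}, \eqref{as 32}, \eqref{as 31} (Lemma~\ref{inside lemma}). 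Your three-piece split $f=f_1+f_2+f_3$ with sharp cutoffs does the same work more directly by separating the spatially-far and temporally-far parts up front, which is arguably cleaner and avoids the mixed terms $\cI_{11},\cI_{12},\cI_{221}$ appearing in the paper's argument. One technical point you should address: since $f_2,f_3$ carry sharp cutoffs and are only in $L_2$, not $C_c^\infty$, the pointwise kernel representation of $\cG f_i$ that your bounds rely on needs justification; the paper handles the analogous issue at the end of Lemma~\ref{inside lemma} by first proving the estimate for smooth $f$ and then passing to the limit via Fatou.
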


\vspace{4mm}

Next, we formulate the conditions on the pseudo-differential operators $A(t)$
such that the kernels $K(t,s,x)$ related to $A(t)$ satisfy Assumptions \ref{as 1} and \ref{as 2}. Let $A(t)$ be an  operator with the symbol $\psi(t,\xi)$, that is
$$
\cF (A(t)u)(\xi)=\psi(t,\xi) \cF(u)(\xi), \quad \forall \, u\in C^{\infty}_c (\fR^d).
$$
Define the kernel $p(t,s,x)$ by the formula
\begin{align*}
p(t,s,x)=I_{s<t } \cF^{-1} \Big( \exp \big(\int_s^t \psi(r,\xi))dr \big) \Big)(x),
\end{align*}
so that  the solution of the equation
\begin{equation*}
\frac{\partial u}{\partial t}=A(t)u+f, \quad t\in \fR
\end{equation*}
is (formally) given by
\begin{equation*}
u(t)=\int^t_{-\infty} (p(t,s,\cdot)*f(s,\cdot))(x) ds.
\end{equation*}

Denote
$$
K(t,s,x)=(-\Delta)^{\gamma/2}p(t,s,x)
$$
and
$$
\cG  f(t,x) := (-\Delta)^{\gamma/2}u:=  \int_{-\infty}^t K(t,s, \cdot) \ast f(s, \cdot) (x)~ds.
$$

By $\Re z$ we denote the real part of $z$.

\begin{assumption}
                     \label{as 7.8}
There exists a constant $\nu >0$
such that for any $t \in \fR, \xi\in \fR^d\setminus \{0\}$ and  multi-index $|\alpha| \leq  \lfloor \frac{d}{2} \rfloor+1$,
\begin{equation}
     \label{A1}
      \Re [\psi(t,\xi)]  \leq - \nu |\xi|^\gamma,\quad \quad
 |D^\alpha \psi(t,\xi)| \leq \nu^{-1} |\xi|^{\gamma -|\alpha|}.
 \end{equation}
\end{assumption}

\begin{thm}         \label{pseudo thm}
  Let Assumption \ref{as 7.8} hold and $p>1$. Then

(i)  for any $ f \in C^{\infty}_c(\fR^{d+1})$,
$$
\|\cG f\|_{BMO(\fR^{d+1})}\leq N(\nu,\gamma,d) \|f\|_{L_{\infty}(\fR^{d+1})}\, ;
$$

(ii) for any $u\in C^{\infty}_c(\fR^{d+1})$,
\begin{equation}
                         \label{7.9.1}
\|u_t\|_{L_p(\fR^{d+1})}+\| (-\Delta)^{\gamma/2}u\|_{L_p(\fR^{d+1})} \leq N(p,\nu,\gamma,d) \|u_t-A(t)u\|_{L_p(\fR^{d+1})}.
\end{equation}
\end{thm}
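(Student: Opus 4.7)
The strategy is to reduce Theorem \ref{pseudo thm} to Theorem \ref{BMO theorem} by verifying Assumptions \ref{as 1} and \ref{as 2} for the specific kernel $K(t,s,x)=(-\Delta)^{\gamma/2}p(t,s,x)$, whose Fourier transform is
$$
\hat K(t,s,\xi)=|\xi|^{\gamma}\exp\!\Bigl(\int_s^{t}\psi(r,\xi)\,dr\Bigr).
$$
Once both assumptions are established, part (i) follows directly from \eqref{bmo part1}, and the $L_p$ bound \eqref{7.9.1} for $p\in[2,\infty)$ follows from \eqref{bmo part2} applied with $f=u_t-A(t)u$. The range $p\in(1,2)$ and the bound on $\|u_t\|_{L_p}$ are then handled by duality and multiplier theory.

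\textbf{Checking the two assumptions.} For Assumption \ref{as 1}, the first half of \eqref{A1} yields
$$
|\hat K(t,s,\xi)|=|\xi|^{\gamma}\exp\!\Bigl(\int_s^{t}\Re\psi(r,\xi)\,dr\Bigr)\leq |\xi|^{\gamma}e^{-\nu(t-s)|\xi|^{\gamma}}=:H(t-s,\xi),
$$
and a change of variable gives $\int_0^{\infty}H(u,\xi)\,du=1/\nu$, independent of $\xi$. Assumption \ref{as 2} is the main technical step. I would convert the derivative bounds $|D^{\alpha}\psi|\leq\nu^{-1}|\xi|^{\gamma-|\alpha|}$ into pointwise estimates on $K(t,s,\cdot)$ via a standard scaling plus Plancherel argument: by integration by parts $\lfloor d/2\rfloor+1$ times and the Leibniz rule, one obtains
$$
|x|^{\lfloor d/2\rfloor+1}|K(t,s,x)|\leq N(t-s)^{-(d+\gamma+\lfloor d/2\rfloor+1)/\gamma}\Bigl(1+\tfrac{|x|}{(t-s)^{1/\gamma}}\Bigr)^{-M}
$$
for a suitably large $M$, which, after scaling $z\mapsto(t-s)^{1/\gamma}z$, gives \eqref{as 21} with $\varphi(\rho)\downarrow 0$ as $\rho\to 0$. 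For the time-regularity \eqref{as 32}, I would write
$$
\hat K(s,\tau,\xi)-\hat K(r,\tau,\xi)=\hat K(r,\tau,\xi)\bigl(e^{\int_r^{s}\psi\,d\sigma}-1\bigr)
$$
and use $|e^{\int_r^{s}\psi\,d\sigma}-1|\lesssim (s-r)|\xi|^{\gamma}$ together with the gaussian-type decay of $\hat K(r,\tau,\xi)$ to recover the desired scaling in $(s-r)(r-a)^{-1}$. For \eqref{as 31}, the identity $\widehat{K(\cdot+h)-K(\cdot)}(\xi)=\hat K(s,\tau,\xi)(e^{ih\cdot\xi}-1)$ and the bound $|e^{ih\cdot\xi}-1|\lesssim\min(1,|h||\xi|)$ give, after the same integration by parts and scaling, the modulus of continuity in $|h|(s-a)^{-1/\gamma}$.

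\textbf{From $L_p$ for $p\geq 2$ to the full statement.} Given the two assumptions, Theorem \ref{BMO theorem} immediately gives part (i) and the inequality $\|(-\Delta)^{\gamma/2}u\|_{L_p}\leq N\|u_t-A(t)u\|_{L_p}$ for $p\in[2,\infty)$. To extend to $p\in(1,2)$, I would argue by duality: the adjoint in $(t,x)$ of the operator $f\mapsto\cG f$ has kernel $K^{*}(t,s,x)=\overline{K(-s,-t,-x)}$ and corresponds to the pseudo-differential operator with symbol $\overline{\psi(-t,-\xi)}$, which satisfies Assumption \ref{as 7.8} with the same constants; applying the case $p'\in[2,\infty)$ to the adjoint then yields the case $p\in(1,2)$ for the original operator. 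Finally, to control $\|u_t\|_{L_p}$, I would use $u_t=A(t)u+f$ together with the identity $A(t)u=\cF^{-1}\bigl(m(t,\xi)\cF((-\Delta)^{\gamma/2}u)\bigr)$ where $m(t,\xi)=\psi(t,\xi)/|\xi|^{\gamma}$; the bounds in \eqref{A1} imply that $m(t,\cdot)$ satisfies the Mikhlin multiplier condition uniformly in $t$, hence $\|A(t)u\|_{L_p}\leq N\|(-\Delta)^{\gamma/2}u\|_{L_p}$ and \eqref{7.9.1} follows. The main obstacle is the derivation of the kernel estimates in Assumption \ref{as 2} under the minimal derivative count $\lfloor d/2\rfloor+1$; this forces the use of Plancherel rather than pointwise $L^{1}$ bounds on the symbol, and requires a careful split according to whether the time/space perturbation lies above or below the intrinsic parabolic scale $(t-s)^{1/\gamma}$.
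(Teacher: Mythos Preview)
Your overall strategy is exactly the paper's: verify Assumptions \ref{as 1} and \ref{as 2} for $K=(-\Delta)^{\gamma/2}p$, invoke Theorem \ref{BMO theorem} for part (i) and for $p\ge 2$, then reach $p\in(1,2)$ by duality. Your verification of Assumption \ref{as 1} is identical to the paper's, and your duality argument is the same up to cosmetic differences (the paper uses the kernel $P(t,s,x)=K(-s,-t,x)$ with symbol $\psi(-t,\xi)$ and a direct bilinear computation rather than a formal adjoint).

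The one place your sketch does not hold together is the displayed pointwise bound on $|x|^{\lfloor d/2\rfloor+1}|K(t,s,x)|$. With only $\lfloor d/2\rfloor+1$ derivatives of $\psi$ at hand, integration by parts yields at best an $L^{\infty}$ bound on $x^{\alpha}K(t,s,\cdot)$ for $|\alpha|\le \lfloor d/2\rfloor+1$; there is no mechanism to produce the additional factor $(1+|x|/(t-s)^{1/\gamma})^{-M}$ for large $M$, since that would require roughly $\lfloor d/2\rfloor+1+M$ derivatives. And the bare bound $|K(t,s,x)|\lesssim |x|^{-(\lfloor d/2\rfloor+1)}$ at infinity is not integrable on $\fR^{d}$ once $d\ge 3$, so it cannot give the $L^{1}$-type estimates \eqref{as 21}--\eqref{as 31}. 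You flag this yourself at the end (``forces the use of Plancherel''), but the sketch as written is inconsistent with that remark. The paper's fix is precisely the $L^{2}$ route: it introduces the scaled kernel $q_{1}(t,s,x)=(t-s)^{d/\gamma}p(t,s,(t-s)^{1/\gamma}x)$ and proves, via Parseval and a fractional-Laplacian lemma (Lemma \ref{frac est}), that $\sup_{t>s}\bigl\||x|^{d/2+\delta}(-\Delta)^{\gamma/2}q_{1}(t,s,\cdot)\bigr\|_{L_{2}(\fR^{d})}<\infty$ for some small $\delta>0$ (Lemma \ref{ker int fin}). H\"older's inequality then converts this into the weighted $L^{1}$ bound $\sup_{t>s}\int |z|^{\varepsilon\gamma}|(-\Delta)^{\gamma/2}q_{1}(t,s,z)|\,dz<\infty$, which after undoing the scaling gives \eqref{as 21} (Lemma \ref{lemma 54}). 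Estimates \eqref{as 32} and \eqref{as 31} are obtained by the mean-value theorem in $t$ and $x$ applied to companion scaled kernels (Lemma \ref{lemma 55}); your Fourier-side factorizations $e^{\int_{r}^{s}\psi}-1$ and $e^{ih\cdot\xi}-1$ are equivalent to this once one passes back to physical space, but again the passage requires the $L^{2}$ machinery, not a pointwise bound.

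One point where you are actually more careful than the paper: you close the bound on $\|u_{t}\|_{L_{p}}$ via the Mikhlin condition on $m(t,\xi)=\psi(t,\xi)/|\xi|^{\gamma}$, giving $\|A(t)u\|_{L_{p}}\le N\|(-\Delta)^{\gamma/2}u\|_{L_{p}}$ and hence $\|u_{t}\|_{L_{p}}\le\|f\|_{L_{p}}+\|A(t)u\|_{L_{p}}$. The paper's proof section only establishes $\|(-\Delta)^{\gamma/2}u\|_{L_{p}}\le N\|f\|_{L_{p}}$ and leaves this last step implicit.
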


\mysection{Some fundamental estimates}

In this section we estimate the mean oscillation of $\cG f$ in terms of $\|f\|_{L_{\infty}}$.
Recall that
$$
\cG  f(t,x) :=  \int_{-\infty}^t K(t,s, \cdot) \ast f(s, \cdot) (x)~ds.
$$

We first derive an $L_2$ estimate of $\cG f$.
\begin{lemma}       \label{l2 lemma}
Suppose that Assumption \ref{as 1} holds and $f\in C^{\infty}_c(\fR^{d+1})$.
Then
$$
\|\cG f \|_{L_2( \fR^{d+1})} \leq N \| f\|_{L_2( \fR^{d+1})},
$$
where the constant $N$ is independent of $f$. Consequently, the map $f\to \cG f$ is extendable to a bounded linear operator  on $L_2(\fR^{d+1})$.
\end{lemma}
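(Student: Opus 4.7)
The plan is to reduce everything to an $L^2$ estimate in the frequency variable via Plancherel's theorem, and then to recognize the structure in time as a convolution that can be controlled by Young's inequality using Assumption \ref{as 1}.

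First I would take the partial Fourier transform in the spatial variable only. Since
$$
\cF_x(\cG f)(t,\xi) = \int_{-\infty}^t \hat K(t,s,\xi)\,\hat f(s,\xi)\,ds,
$$
the pointwise bound \eqref{as 11} gives
$$
|\cF_x(\cG f)(t,\xi)| \leq \int_{-\infty}^{t} H(t-s,\xi)\,|\hat f(s,\xi)|\,ds = \bigl(\tilde H_\xi * |\hat f(\cdot,\xi)|\bigr)(t),
$$
where $\tilde H_\xi(r):= H(r,\xi)\,I_{r>0}$ and the convolution is in the single variable $t$. I would then, for each fixed $\xi$, apply the one-dimensional Young inequality in the $L_1 * L_2 \to L_2$ form:
$$
\int_{\fR} |\cF_x(\cG f)(t,\xi)|^2\,dt \leq \Bigl(\int_0^\infty H(r,\xi)\,dr\Bigr)^2 \int_{\fR} |\hat f(s,\xi)|^2\,ds.
$$
By the uniform bound \eqref{as 12}, the factor in parentheses is bounded by some constant $C$ independent of $\xi$.

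Integrating the above over $\xi$ and using Fubini together with Plancherel in $x$ gives
$$
\|\cG f\|_{L_2(\fR^{d+1})}^2 = (2\pi)^{-d}\int_{\fR^d}\!\!\int_{\fR} |\cF_x(\cG f)(t,\xi)|^2\,dt\,d\xi \leq (2\pi)^{-d} C^2 \int_{\fR^d}\!\!\int_{\fR} |\hat f(s,\xi)|^2\,ds\,d\xi = C^2 \|f\|_{L_2(\fR^{d+1})}^2.
$$
The justification of Fubini and the validity of the pointwise inequality on $\cF_x(\cG f)$ are not delicate here because Remark \ref{well define g} already shows that $\cG f$ is well defined and the integrals all converge absolutely for $f\in C^\infty_c(\fR^{d+1})$. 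Finally, since $C^\infty_c(\fR^{d+1})$ is dense in $L_2(\fR^{d+1})$, the linear map $f\mapsto \cG f$ extends uniquely by continuity to a bounded operator on $L_2(\fR^{d+1})$ with the same operator norm bound. There is no real obstacle in this argument; the only ingredient beyond bookkeeping is the observation that \eqref{as 12} is designed precisely so that the $L_1$-in-time norm of the frequency-localized kernel is uniformly controlled, which is exactly what Young's inequality requires.
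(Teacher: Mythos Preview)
Your proof is correct and follows essentially the same route as the paper: both take the spatial Fourier transform, bound $|\hat K(t,s,\xi)|$ by $H(t-s,\xi)$ via \eqref{as 11}, and then exploit the convolution structure in time together with \eqref{as 12}. The only cosmetic difference is that you invoke Young's inequality $L_1*L_2\to L_2$ directly for each fixed $\xi$, whereas the paper unpacks that same inequality by applying Parseval in the time variable and bounding $|\int_0^\infty e^{it\tau}H(t,\xi)\,dt|$ by $\int_0^\infty H(t,\xi)\,dt$; these are the same argument.
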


\begin{proof}
By Parseval's identity,
\begin{align*}
&\int_{-\infty}^\infty \int_{\fR^d} |\cG f(t,x)|^2 dx dt\\
&=N\int_{-\infty}^\infty \int_{\fR^d} \Big|\int_{-\infty}^t \hat K(t,s,\xi) \hat f (s,\xi)~ds \Big|^2  d\xi dt \\
& \leq N\int_{-\infty}^\infty \int_{\fR^d} \Big|\int_{-\infty}^\infty  I_{ s < t} |\hat K(t,s,\xi)| |\hat f (s,\xi)|~ds \Big|^2  d\xi dt.
\end{align*}
Hence it follows from Assumption \ref{as 1} and Parseval's identity that
\begin{align*}
&\int_{-\infty}^\infty \int_{\fR^d} |\cG f(t,x)|^2 dxdt\\
&\leq N\int_{-\infty}^\infty \int_{\fR^d} \Big|\int_{-\infty}^\infty  I_{s < t} H(t-s,\xi) |\hat f (s,\xi)|~ds \Big|^2  d\xi dt \\
&=N\int_{-\infty}^\infty \int_{\fR^d} \Big|\int_{-\infty}^\infty e^{i t \tau} \int_{-\infty}^\infty   I_{ s < t} H(t-s,\xi)  |\hat f (s,\xi)|~dsdt \Big|^2    d \xi  d\tau\\
&=N\int_{-\infty}^\infty \int_{\fR^d} \Big|\int_{-\infty}^\infty \int_{-\infty}^\infty e^{i t \tau}  I_{ s < t} H(t-s,\xi) dt |\hat f (s,\xi)|~ds \Big|^2    d \xi  d \tau\\
&\leq N\int_{-\infty}^\infty \int_{\fR^d} \Big|\int_{0}^\infty e^{i t \tau}  H(t,\xi) dt\Big|^2 \Big|\int_{\fR}  e^{i s \tau} |\hat f (s,\xi)|~ds \Big|^2    d \xi  d \tau\\
&\leq N\int_{-\infty}^\infty \int_{\fR^d}  \Big|\int_{\fR}  e^{i s \tau} |\hat f (s,\xi)|~ds \Big|^2    d \xi d\tau\\
&= N \int_{-\infty}^\infty \int_{\fR^d}    |\hat f (s,\xi)|^2   d \xi ds= N\int_{-\infty}^\infty \int_{\fR^d} |f (s,x)|^2   dx ds.
\end{align*}
The lemma is proved.
\end{proof}

For the rest of this section, $\cG$ is understood as a bounded linear operator on $L_2(\fR^{d+1})$.

\begin{corollary}       \label{inside sup est}
Let   $f\in  L_2(\fR^{d+1})$ and vanish on $\fR^{d+1} \setminus Q_{3c}(t_0,0)$.
Suppose  that Assumption \ref{as 1} holds.
Then
\begin{align*}
\int_{Q_c(t_0,0)} |  \cG f (s,y)|~ds dy \leq   N|Q_{c}| \cdot \sup_{Q_{3c}(t_0,0)} | f|,
\end{align*}
where $N$ does not depend on $c, t_0$ and $f$.
\end{corollary}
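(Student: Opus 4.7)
The plan is a direct combination of Cauchy--Schwarz on $Q_c(t_0,0)$, the $L_2$-boundedness of $\cG$ just established in Lemma \ref{l2 lemma}, and the hypothesis that $f$ is supported in $Q_{3c}(t_0,0)$. The point is that once $\cG$ is a bounded operator on $L_2(\fR^{d+1})$, any local $L_1$ bound can be converted into an $L_\infty$ bound on the support at the cost of the ratio $|Q_{3c}|/|Q_c|$, which is a dimensional constant.

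First I would apply Cauchy--Schwarz in the form
\begin{align*}
\int_{Q_c(t_0,0)} |\cG f(s,y)|\, ds\, dy
  \leq |Q_c|^{1/2} \left( \int_{Q_c(t_0,0)} |\cG f(s,y)|^2\, ds\, dy \right)^{1/2}.
\end{align*}
Since $Q_c(t_0,0)\subset \fR^{d+1}$, the last $L_2$ integral is at most $\|\cG f\|_{L_2(\fR^{d+1})}^2$, and Lemma \ref{l2 lemma} gives $\|\cG f\|_{L_2(\fR^{d+1})}\leq N\|f\|_{L_2(\fR^{d+1})}$ with $N$ depending only on the constants in Assumption \ref{as 1}. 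Using the support assumption on $f$,
\begin{align*}
\|f\|_{L_2(\fR^{d+1})} = \|f\|_{L_2(Q_{3c}(t_0,0))}
  \leq |Q_{3c}|^{1/2} \sup_{Q_{3c}(t_0,0)} |f|.
\end{align*}

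Combining these bounds, and noting that $|Q_{3c}| = 3^{d+\gamma}|Q_c|$, we conclude
\begin{align*}
\int_{Q_c(t_0,0)} |\cG f(s,y)|\, ds\, dy
  \leq N\, |Q_c|^{1/2} |Q_{3c}|^{1/2} \sup_{Q_{3c}(t_0,0)} |f|
  \leq N'\, |Q_c| \sup_{Q_{3c}(t_0,0)} |f|,
\end{align*}
where $N'$ depends only on $d$, $\gamma$, and the constants in Assumption \ref{as 1}. There is essentially no technical obstacle here; the only subtlety is that the ratio of cube sizes $|Q_{3c}|/|Q_c|$ must be independent of $c$ and $t_0$ so that the final constant depends neither on the cube nor on $f$, which is immediate from the parabolic scaling built into the definition $Q_c(t_0,x_0)=(t_0-c^\gamma, t_0+c^\gamma)\times B_c(x_0)$.
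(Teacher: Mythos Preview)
Your proof is correct and follows essentially the same approach as the paper: Cauchy--Schwarz (H\"older) on $Q_c$, the $L_2$ bound from Lemma~\ref{l2 lemma}, and then the support condition together with $|Q_{3c}|\le N|Q_c|$. The paper's argument is line-for-line the same, so there is nothing to add.
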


\begin{proof}
By H\"older's inequality and Lemma \ref{l2 lemma},
\begin{align*}
\int_{Q_c(t_0,0)} |\cG f (s,y)|~ds dy
&\leq \Big(\int_{Q_c(t_0,0)} |\cG f (s,y)|^2~ds dy \Big)^{1/2} |Q_c|^{1/2} \\
&\leq \Big(\int_{\fR^{d+1}} |\cG f (s,y)|^2~ds dy \Big)^{1/2} |Q_c|^{1/2} \\
&\leq \Big(\int_{\fR^{d+1}} |f (s,y)|^2~ds dy \Big)^{1/2} |Q_c|^{1/2} \\
&= \Big(\int_{  Q_{3c}(t_0,0)} |f (s,y)|^2~ds dy \Big)^{1/2} |Q_c|^{1/2} \\
&\leq N |Q_{c}|  \sup_{Q_{3c}(t_0,0)} |f|.
\end{align*}
The lemma is proved.
\end{proof}

In the following lemma we estimate the mean oscillation of $\cG f$ on $Q_c(t_0,0)$ when $f$ vanishes near $Q_c(t_0,0)$.

\begin{lemma}       \label{inside lemma}
Suppose that Assumption \ref{as 2} holds.
Let $f\in  L_2(\fR^{d+1})$ and $f=0$ on $Q_{2c}(t_0,0)$. Then
\begin{align}
                    \label{mean osc in zero}
\int_{Q_c(t_0,0)} \int_{Q_c(t_0,0)} |  \cG f (s,y)- \cG f (r,z)|~dsdr dydz  \leq   N|Q_{c}|^2 \cdot  \sup_{\fR^{d+1}} | f|,
\end{align}
where $N$ does not depend on $c, t_0$ and $f$.
\end{lemma}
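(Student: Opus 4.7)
My approach is to prove the stronger pointwise estimate
$$
|\cG f(s,y)-\cG f(r,z)|\leq N\|f\|_{L_\infty(\fR^{d+1})}\quad\text{for a.e. }(s,y),(r,z)\in Q_c(t_0,0),
$$
after which integration over $Q_c(t_0,0)\times Q_c(t_0,0)$ immediately yields \eqref{mean osc in zero}. By approximating $f$ in $L_2$ by $C^\infty_c$ functions vanishing near $Q_{2c}(t_0,0)$ (with $L_\infty$-norms dominated by $\|f\|_\infty$) and invoking the $L_2$-continuity of $\cG$ from Lemma \ref{l2 lemma}, we may assume $f\in C^\infty_c(\fR^{d+1})$, in which case $\cG f$ is given by the pointwise convolution formula \eqref{g definition}.

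Fix $(s,y),(r,z)\in Q_c(t_0,0)$ with $s\geq r$ (the opposite case is symmetric) and set $a:=t_0-(2c)^\gamma$. Record the dimensional bounds $s-r\leq 2c^\gamma$, $r-a\geq(2^\gamma-1)c^\gamma$, $s-a\leq(1+2^\gamma)c^\gamma$, and $|y-z|\leq 2c$, and decompose
\begin{align*}
\cG f(s,y)-\cG f(r,z)&=\int_r^s\!\!\int_{\fR^d}K(s,\tau,y-w)f(\tau,w)\,dw\,d\tau\\
&\quad+\int_{-\infty}^r\!\!\int_{\fR^d}[K(s,\tau,y-w)-K(r,\tau,y-w)]f(\tau,w)\,dw\,d\tau\\
&\quad+\int_{-\infty}^r\!\!\int_{\fR^d}[K(r,\tau,y-w)-K(r,\tau,z-w)]f(\tau,w)\,dw\,d\tau\\
&=:I_1+I_2+I_3,
\end{align*}
so that $I_1$ captures the extra time strip, $I_2$ the temporal variation of $K$, and $I_3$ the spatial increment.

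For $I_1$: the range $\tau\in(r,s)\subset(t_0-(2c)^\gamma,t_0+(2c)^\gamma)$ together with the vanishing of $f$ on $Q_{2c}(t_0,0)$ restricts $w$ to $\{|w|\geq 2c\}$, and then $|y|<c$ forces $|y-w|\geq c$; Assumption \ref{as 2}(i) yields $|I_1|\leq\varphi(2)\|f\|_\infty$. For $I_2$ and $I_3$, split the $\tau$-integral at $a$. Over $(-\infty,a)$, Assumption \ref{as 2}(ii) bounds the $I_2$-contribution by $\varphi(2/(2^\gamma-1))\|f\|_\infty$, while Assumption \ref{as 2}(iii) (with $h=y-z$, after the change of variables $u=z-w$) bounds the $I_3$-contribution by $\varphi(2/(2^\gamma-1)^{1/\gamma})\|f\|_\infty$. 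Over $(a,r)$, $\tau$ still lies in $(t_0-(2c)^\gamma,t_0+(2c)^\gamma)$, so $f(\tau,\cdot)$ is supported in $\{|w|\geq 2c\}$; bounding each of the two kernels in $I_2$ and $I_3$ individually by Assumption \ref{as 2}(i) with $s-a,r-a\leq(1+2^\gamma)c^\gamma$ contributes at most $4\varphi(1+2^\gamma)\|f\|_\infty$. Summing produces the desired pointwise bound with $N=N(\varphi,\gamma)$.

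The one subtle point is the choice of cutoff $a$: it must lie far enough below $(r,s)$ that the $\varphi$-arguments arising from parts (ii) and (iii) of Assumption \ref{as 2} remain uniformly bounded (requiring $r-a\gtrsim c^\gamma$), yet close enough that on $(a,r)$ the slice $f(\tau,\cdot)$ still vanishes on $\{|w|<2c\}$ (requiring $a\geq t_0-(2c)^\gamma$). The choice $a=t_0-(2c)^\gamma$ saturates both constraints simultaneously and gives bounds that are scale-invariant in $c$ and depend only on $\varphi$ and $\gamma$.
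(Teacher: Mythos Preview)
Your proof is correct and follows essentially the same approach as the paper's. Both arguments reduce to smooth $f$ by approximation, establish a uniform pointwise bound on the oscillation, split the time integral at $a=t_0-(2c)^\gamma$, handle the strip $(a,s)$ via Assumption~\ref{as 2}(i) after using $f=0$ on $Q_{2c}(t_0,0)$ to force $|y-w|\geq c$, and handle $(-\infty,a)$ via Assumptions~\ref{as 2}(ii),(iii). The only cosmetic difference is that the paper compares each point $(s,y)$ to the fixed reference $(t_0-c^\gamma,0)$ and then uses the triangle inequality, whereas you compare two arbitrary points $(s,y),(r,z)$ directly; your three-term decomposition $I_1+I_2+I_3$ is the natural counterpart of the paper's $\cI_1,\cI_2$ split. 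Your approximation step is stated a bit tersely---strictly speaking the mollified $f_n$ only vanish on a slightly smaller box $Q_{(2-2\varepsilon)c}(t_0,0)$, so one should first prove the estimate on $Q_{(1-\varepsilon)c}(t_0,0)$ and then let $\varepsilon\to 0$ (as the paper does), but this is routine and your constants remain uniform in $\varepsilon$.
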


\begin{proof}
First we assume $f \in C_c^\infty (\fR^{d+1})$. We will prove
\begin{align}
                    \label{mean osc in zero 0}
\int_{Q_c(t_0,0)} |  \cG f (s,y)- \cG f (t_0-c^\gamma,0)|~ds dy  \leq   N|Q_{c}| \cdot  \sup_{\fR^{d+1}} | f|.
\end{align}
Let $(s,y) \in Q_c(t_0,0)$. Then
\begin{align*}
&|\cG f (s,y)- \cG f (t_0-c^\gamma,0)|\\
&\leq | \cG f (s,y)-  \cG f(s,0)| + | \cG f(s,0)- \cG f (t_0-c^\gamma,0)|\\
&=: \cI_1 + \cI_2
\end{align*}

We consider $\cI_1$ first.
\begin{eqnarray*}
\cI_1
&=& \Big| \int_{-\infty}^{s} \int_{\fR^d}  \big(K(s,\tau, y-z)-K(s,\tau,-z)\big) f(\tau, z)~dz d\tau\Big| \\
&=& \Big|\int^{s}_{t_0-(2c)^{\gamma}}\int_{\fR^d} \cdots \,dzd\tau +\int_{-\infty}^{t_0-(2c)^{\gamma}} \int_{\fR^d}\cdots \,dzd\tau \Big|\\
&\leq&  \int_{t_0-(2c)^{\gamma}}^{s} \int_{\fR^d}  |K(s,\tau, z)| |f(\tau, y-z)|~dz d\tau \\
&&+ \int_{t_0-(2c)^{\gamma}}^{s} \int_{\fR^d}  |K(s,\tau, z)| |f(\tau ,-z)|~dz  \\
&&+\int_{-\infty}^{t_0-(2c)^{\gamma}} \int_{\fR^d}  \big|K(s,\tau, y-z)-K(s,\tau,-z)\big| |f(\tau, z)|~dz d\tau \\
&=:&\cI_{11}+\cI_{12}+\cI_{13}.
\end{eqnarray*}
Note that if $t_0 -(2c)^{\gamma} < \tau  \leq s \leq  t_0+ c^{\gamma}$ and $|z| \leq c$, then
\begin{align}           \label{zero con}
f(\tau,  y -z) =0~\text{and}~f(\tau,-z)=0,
\end{align}
 because $|y-z| \leq 2c$ and $|-z| \leq c$, and $f=0$ on $Q_{2c}(t_0,0)$.
Hence by \eqref{as 21}, $\cI_{11}+\cI_{12}$ is less than or equal to
\begin{align*}
&N \sup_{\fR^{d+1}}|f| \int_{t_0-(2c)^{\gamma}}^{s} \int_{|z| \geq c} | K(s,\tau, z)| ~dz d\tau\\
&\leq N \varphi([s- (t_0-(2c)^\gamma)]c^{-\gamma}) \sup_{\fR^{d+1}}|f| \leq N \sup_{\fR^{d+1}}|f| .
\end{align*}
Also, by
 \eqref{as 31},
\begin{align*}
\cI_{13}&\leq N \sup_{\fR^{d+1}}\, |f|  \int_{-\infty}^{t_0-(2c)^{\gamma}} \int_{\fR^d} \big|K(s,\tau, y-z)-K(s,\tau,-z)\big| ~dz d\tau\\
&\leq N \varphi\big(c (s-t_0+(2c)^{\gamma})^{-1/\gamma} \big) \sup_{\fR^{d+1}}|f|\\
&\leq N \sup_{\fR^{d+1}}|f|.
\end{align*}

Next, we consider $\cI_2$. Note that
\begin{align*}
\cI_2&=\Big|  \cG f (s,0) -  \cG f(t_0-c^\gamma,0) \Big| \\
&= \Big| \int_{-\infty}^{s} \int_{\fR^d}  K(s,\tau, z)  f(\tau,-z)~dz d\tau - \int_{-\infty}^{t_0-c^\gamma} \int_{\fR^d}  K(t_0-c^\gamma , \tau, z)  f(\tau,-z)~dz d\tau \Big|\\
&\leq \Big| \int_{-\infty}^{s} \int_{\fR^d} K(s,\tau, z)  f(\tau,-z)~dz d\tau - \int_{-\infty}^{t_0-c^\gamma} \int_{\fR^d} K(s, \tau, z)  f(\tau,-z)~dz d\tau \Big|\\
&+\Big| \int_{-\infty}^{t_0-c^\gamma} \int_{\fR^d} \left[K(s,\tau, z)-K(t_0-c^\gamma, \tau, z) \right] f(\tau,-z)~dz d\tau \Big|\\
&=: \cI_{21} + \cI_{22}.
\end{align*}
Recall that $f=0$ on $[t_0-(2c)^\gamma,t_0+(2c)^\gamma] \times B_{2c}$.
So by \eqref{as 21}
\begin{align*}
\cI_{21}
&\leq  \int_{t_0-c^\gamma}^{s} \int_{\fR^d} |K(s,\tau, z)| | f|(\tau,-z)~dz d\tau \\
&\leq  \sup_{\fR^{d+1}}| f| \int_{t_0-c^\gamma}^{s} \int_{|z| \geq c} | K(s,\tau, z)| ~dz d\tau \\
&\leq  N\varphi([s -(t_0-c^\gamma)]c^{-\gamma})\,\sup_{\fR^{d+1}}| f|  \leq N\sup_{\fR^{d+1}}| f|.
\end{align*}
Also,
\begin{align*}
\cI_{22}
&\leq  \int_{t_0-(2c)^{\gamma}}^{t_0-c^\gamma} \int_{\fR^d} | K(s,\tau, z) - K(t_0-c^\gamma, \tau, z)| |f(\tau,-z)|dz d\tau \\
&+  \sup_{\fR^{d+1}} | f| \int_{-\infty}^{t_0-(2c)^{\gamma}} \int_{\fR^d} \Big| K(s,\tau, z) - K(t_0-c^\gamma, \tau, z) \Big| dz d\tau \\
&=: \cI_{221}+ \cI_{222}.
\end{align*}
Recalling \eqref{zero con}, by \eqref{as 21} we have
\begin{eqnarray*}
\cI_{221}
&\leq&  \sup_{\fR^{d+1}} | f| \int_{t_0-(2c)^{\gamma}}^{s} \int_{|z| \geq c} | K(s,\tau, z)|~dz d\tau \\
&&+ \sup_{\fR^{d+1}}|f|\int_{t_0-(2c)^{\gamma}}^{t_0-c^\gamma} \int_{|z| \geq c} | K(t_0-c^\gamma, \tau, z) | dz d\tau\\
&\leq& N \sup_{\fR^{d+1}}|f|.
\end{eqnarray*}
On the other hand, by \eqref{as 32}, we obtain
$$
\cI_{222} \leq \varphi\big([s -(t_0-c^\gamma)] (2^\gamma -1)^{-1} c^{-\gamma}\big)\sup_{\fR^{d+1}}|f| \leq N \sup_{\fR^{d+1}}|f|.
$$
Hence \eqref{mean osc in zero 0} is proved and this obviously implies \eqref{mean osc in zero} for $f \in C_c^\infty(\fR^{d+1})$.

 Now we consider the general case, that is  $f \in L_2(\fR^{d+1})$.  For given $\varepsilon>0$  we choose a sequence of functions $f_n \in C_c^\infty(\fR^{d+1})$ such that $f_n=0$ on $Q_{(2-2\varepsilon)c}(t_0,0)$,
$\cG f_n \to \cG f~(a.e.)$ and $ \sup_{\fR^{d+1}} |f_n| \leq \sup_{\fR^{d+1}} |f|$.
Then by Fatou's theorem,
\begin{align*}
&\int_{Q_{(1-\varepsilon)c}(t_0,0)}\int_{Q_{(1-\varepsilon)c}(t_0,0)} |  \cG f (s,y)- \cG f (r,z)|~dsdr dydz \\
&\leq \liminf_{n \to \infty} \int_{Q_{(1-\varepsilon)c}(t_0,0)} \int_{Q_{(1-\varepsilon)c}(t_0,0)} |  \cG f_n (s,y)- \cG f_n (r,z)|~dsdr dydz \\
&\leq   N|Q_{c}|^2 \cdot  \liminf_{n \to \infty} \sup_{\fR^{d+1}} | f_n| \leq N|Q_{c}|^2 \cdot  \sup_{\fR^{d+1}} | f|.
\end{align*}
Since $\varepsilon$ is arbitrary the lemma is proved.
\end{proof}

We introduce a simple decomposition of $f$. For any $\lambda >0$ set
$$
f_{1,\lambda}(t,x) := f(t,x) I_{|f| > \lambda} , \quad f_{2,\lambda}(t,x) :=f(t,x)I_{|f| \leq \lambda}.
$$

The following lemma is  a modified version of  Marcinkiewicz's interpolation theorem. We provide a proof for the sake of completeness.

\begin{lemma}
                \label{ma interpol}
Let $\cA$ be a subadditive operator on $L_2(\fR^{d+1}) \cap L_\infty(\fR^{d+1})$ and
 $f \in L_2(\fR^{d+1}) \cap L_\infty(\fR^{d+1})$.
Suppose that
\begin{align}
                \label{lambda as 1}
\|\cA (f_{1,\lambda}) \|_{L_2(\fR^{d+1})} \leq N_1\| f_{1,\lambda} \|_{L_2(\fR^{d+1})}
\end{align}
and
\begin{align}
                    \label{lambda as 2}
\|\cA (f_{2,\lambda}) \|_{L_\infty(\fR^{d+1})} \leq N_2\| f_{2,\lambda} \|_{L_\infty(\fR^{d+1})}
\end{align}
for all $\lambda >0$. Then for $p \in (2, \infty)$ we have
\begin{align*}
\|\cA f \|_{L_p(\fR^{d+1})} \leq N\| f \|_{L_p(\fR^{d+1})},
\end{align*}
where $N$ depends only on $d$, $p$, $N_1$, and $N_2$.
\end{lemma}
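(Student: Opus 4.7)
The plan is to run a Marcinkiewicz-type distribution function argument, coupling the splitting parameter $\lambda$ to the level $\alpha$ of $|\cA f|$ so that the $L_\infty$-piece becomes negligible at that level.

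First I would fix $\alpha > 0$ and set $\lambda := \alpha/(2N_2)$. Since $\|f_{2,\lambda}\|_{L_\infty(\fR^{d+1})} \leq \lambda$, assumption \eqref{lambda as 2} gives $\|\cA f_{2,\lambda}\|_{L_\infty(\fR^{d+1})} \leq N_2 \lambda = \alpha/2$. By subadditivity of $\cA$, $|\cA f| \leq |\cA f_{1,\lambda}| + |\cA f_{2,\lambda}|$ almost everywhere, so
\[
\bigl|\{(t,x): |\cA f(t,x)| > \alpha\}\bigr| \leq \bigl|\{|\cA f_{1,\lambda}| > \alpha/2\}\bigr|.
\]
Since $f\in L_2(\fR^{d+1})$, also $f_{1,\lambda}\in L_2(\fR^{d+1})$; Chebyshev's inequality combined with \eqref{lambda as 1} then yields
\[
\bigl|\{|\cA f| > \alpha\}\bigr| \leq \frac{4}{\alpha^2}\|\cA f_{1,\lambda}\|_{L_2(\fR^{d+1})}^2 \leq \frac{4N_1^2}{\alpha^2} \int_{\{|f|>\alpha/(2N_2)\}} |f(t,x)|^2\, dt\, dx.
\]

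Next I would feed this bound into the layer-cake identity $\|\cA f\|_{L_p}^p = p\int_0^\infty \alpha^{p-1} |\{|\cA f|>\alpha\}|\, d\alpha$ and exchange the order of integration by Tonelli. For each fixed $(t,x)$, the condition $|f(t,x)| > \alpha/(2N_2)$ means $\alpha$ ranges over $(0,\,2N_2|f(t,x)|)$, so the inner integral becomes
\[
\int_0^{2N_2|f(t,x)|} \alpha^{p-3}\, d\alpha = \frac{(2N_2|f(t,x)|)^{p-2}}{p-2},
\]
which is finite precisely because $p > 2$. Collecting constants delivers
\[
\|\cA f\|_{L_p(\fR^{d+1})}^p \leq \frac{4p\, N_1^2\,(2N_2)^{p-2}}{p-2}\,\|f\|_{L_p(\fR^{d+1})}^p,
\]
which is the claim (with $f\in L_2\cap L_\infty$ automatically in $L_p$ via $|f|^p\leq \|f\|_\infty^{p-2}|f|^2$, so both sides are finite and the computation is legitimate).

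The only genuinely delicate point is the choice $\lambda = \alpha/(2N_2)$: it is this coupling that decouples the two hypotheses, forcing the $L_\infty$-piece to sit below half the level while leaving the $L_2$-Chebyshev estimate in charge of the whole level set. Beyond that, no real obstacle appears; the restriction $p > 2$ enters exactly once, namely in the convergence of $\int_0 \alpha^{p-3}\,d\alpha$ near the origin, and is already built into the hypothesis of the lemma.
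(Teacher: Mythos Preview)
Your proof is correct and follows essentially the same Marcinkiewicz-type argument as the paper: the paper parametrizes the layer-cake integral by $\lambda$ with level $2N_2\lambda$, whereas you parametrize by the level $\alpha$ and set $\lambda=\alpha/(2N_2)$, which is just a change of variables. The splitting, the use of subadditivity, Chebyshev on the $L_2$-piece, and the vanishing of the $L_\infty$-piece's level set all match.
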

\begin{proof}
Note that by Fubini's theorem
\begin{align}
                \label{dist decom}
\|\cA f\|^p_{L_p(\fR^{d+1})}
=N\int_0^\infty \big|\{(t,x) : |\cA f(t,x)| > 2N_2 \lambda \}\big| \lambda^{p-1}~d\lambda.
\end{align}
Since for each $\lambda >0$, $f = f_{1,\lambda} + f_{2,\lambda}$ and $\cA$ is subadditive,
\begin{align*}
&\big|\{(t,x) : |\cA f(t,x)| > 2N_2\lambda \}\big|  \\
&\leq \big|\{(t,x) : |\cA f_{1,\lambda}(t,x)| > N_2\lambda \}\big| + \big|\{(t,x) : |\cA f_{2,\lambda}(t,x)| > N_2\lambda \}\big|.
\end{align*}
Due to \eqref{lambda as 2},
$$
\|\cA (f_{2,\lambda}) \|_{L_\infty(\fR^{d+1})} \leq N_2\| f_{2,\lambda} \|_{L_\infty(\fR^{d+1})} \leq N_2\lambda,
$$
which clearly implies
$$
\big|\{(t,x) : |\cA f_{2,\lambda}(t,x)| > N_2\lambda \}\big| =0.
$$
Moreover by \eqref{lambda as 1} and Chebyshev's inequality,
$$
\big|\{(t,x) : |\cA f_{1,\lambda}(t,x)| > N_2\lambda \}\big| \leq N\frac{1}{\lambda^2} \|f_{1,\lambda}\|^2_{L_2(\fR^{d+1})}.
$$
Hence going back to \eqref{dist decom}, we get

\begin{align*}
\|\cA f\|^p_{L_p(\fR^{d+1})}
&\leq N\int_0^\infty \frac{1}{\lambda^2} \|f_{1,\lambda}\|^2_{L_2(\fR^{d+1})} \lambda^{p-1}~d\lambda \\
&\leq N \int_{\fR^{d+1}} |f(t,x)|^2  \int_0^\infty I_{|f| > \lambda}\lambda^{p-3}~ d\lambda dtdx  \\
&\leq N \int_{\fR^{d+1}} |f(t,x)|^p dtdx.
\end{align*}
The lemma is proved.
\end{proof}

\mysection{Proof of Theorem \ref{BMO theorem}}              \label{proof BMO theorem}

{\bf{Part I}}. We first prove (\ref{bmo part1}) for   $f \in L_2(\fR^{d+1}) \cap L_\infty(\fR^{d+1})$.
It suffices to
prove that for each $Q=Q_c(t_0,x_0)$
\begin{align*}
\dashint_Q |\cG f- (\cG f)_Q|~dsdy \leq N \sup_{\fR^{d+1}}|f|.
\end{align*}
Moreover, since $\cG f(\cdot,\cdot)(t,x+x_0)=\cG f(\cdot,x_0+\cdot)(t,x)$, considering a translation we may assume that $x_0=0$. Thus
$$
Q=Q_c(t_0,x_0)=(t_0-c^{\gamma}, t_0+c^{\gamma})\times B_c(0).
$$

Take $\zeta \in C_c^\infty(\fR^{d+1})$ such that $\zeta =1 $ on $Q_{2c}$ and $\zeta=0$ outside of $Q_{3c}$.
Then
\begin{align*}
&\dashint_Q |\cG f- (\cG f)_Q|~dsdy \\
&\leq  2\dashint_Q |\cG (f \zeta)|~dsdy +  \dashint_Q \dashint_Q |\cG (f (1-\zeta))(s,y)-\cG (f (1-\zeta))(r,z)|~dsdrdydz \\
&=: \cI_1 + \cI_2.
\end{align*}
Due to Corollary \ref{inside sup est},
\begin{align*}
\cI_1 \leq
N\frac{1}{|Q|}\int_Q |\cG (f \zeta)(s,y)|~dsdy
\leq N\sup_{\fR^{d+1}} |f \zeta| \leq N\sup_{\fR^{d+1}} |f|.
\end{align*}
On the other hand, by Lemma \ref{inside lemma} we have
$$
\cI_2 \leq N \sup_{\fR^{d+1}} |f(1-\zeta)| \leq N \sup_{\fR^{d+1}} |f|.
$$
Hence for any $f \in L_2(\fR^{d+1}) \cap L_\infty(\fR^{d+1})$ we have
\begin{equation}
                    \label{general bmo}
\|\cG f \|_{BMO(\fR^{d+1})} \leq N \|f\|_{L_\infty(\fR^{d+1})},
\end{equation}
where $N$ is independent of $f$.
Therefore \eqref{bmo part1} is proved.

\vspace{3mm}

{\bf{Part II}}. Next we prove (\ref{bmo part2}). For a measurable function $h(t,x)$ on $\fR^{d+1}$, we define the maximal function
$$
\cM h(t,x)
= \sup_{ Q }  \frac{1}{| Q|}  \int_{ Q} |f(r,z)|~drdz,
$$
and the sharp function $h^\sharp(t,x)$
\begin{align*}
h^\sharp(t,x)
= \sup_{ Q }  \frac{1}{| Q|}  \int_{ Q} |f(r,z) - f_{ Q}|~drdz,
\end{align*}
where $f_{ Q} := \frac{1}{| Q|}  \int_{ Q} f(r,z)~drdz$, and the sup is taken all $ Q$ containing $(t,x)$ of the type
$$
 Q= Q_c(t_0,x_0):=(t_0-c^\gamma,t_0+c^\gamma) \times B_c(x_0), \quad c>0,~(t_0,x_0) \in \fR^{d+1}.
$$
Then by Fefferman-Stein theorem \cite[Theorem 4.2.2]{Ste}, for any $h\in L_p(\fR^{d+1})$,
\begin{align*}
\|h\|_{L_p(\fR^{d+1})} \leq N \|h^{\sharp}\|_{L_p(\fR^{d+1})}.
\end{align*}
Moreover, by  Hardy-Littlewood maximal theorem and the inequality $
|h^\sharp (t,x)| \leq 2 \cM h(t,x)$,
\begin{align}   \label{hardy littlewood}
\|h^\sharp\|_{L_p(\fR^{d+1})} \leq N \|\cM h\|_{L_p(\fR^{d+1})} \leq N\|h\|_{L_p(\fR^{d+1})}.
\end{align}
Combining  Lemma \ref{l2 lemma} with \eqref{hardy littlewood}, we get for  any $f \in L_2(\fR^{d+1})$,
$$
\|( \cG f)^\sharp\|_{L_2(\fR^{d+1})} \leq N \|f\|_{L_2(\fR^{d+1})}.
$$
Moreover  by \eqref{general bmo},
\begin{align}
\|(\cG f)^\sharp\|_{L_{\infty}(\fR^{d+1})}\leq N \|f\|_{L_{\infty}(\fR^{d+1})}.
\end{align}
Note that the  map  $f \to (\cG f)^\sharp$ is subadditive since $\cG$ is a linear operator.
Hence by Lemma \ref{ma interpol} for any $p \in [2, \infty)$ there exists a constant $N$ such that
$$
\|( \cG f)^\sharp\|_{L_p(\fR^{d+1})} \leq N \|f\|_{L_p(\fR^{d+1})}, \quad \forall\, f\in L_2(\fR^{d+1}) \cap  L_\infty(\fR^{d+1}).
$$
Finally by Fefferman-Stein theorem, we get
$$
\|\cG f\|_{L_p(\fR^{d+1})} \leq N \|f\|_{L_p(\fR^{d+1})},
$$
where $N$ is independent of $f$. Therefore (\ref{bmo part2}) is proved.

\mysection{Proof of Theorem \ref{pseudo thm}}       \label{proof pseudo thm}

Recall that $A(t)$ is a pseudo differential operator with the symbol $\psi(t,\xi)$ satisfying
$$
      \Re [\psi(t,\xi)]  \leq - \nu |\xi|^\gamma,\quad
 |D^\alpha \psi(t,\xi)| \leq \nu^{-1} |\xi|^{\gamma -|\alpha|}
$$
for any multi-index $|\alpha| \leq  \lfloor \frac{d}{2} \rfloor+1$. Also recall $p(t,s,x)$ and $K(t,s,x)$ are defined by
$$
p(t,s,x)=I_{s<t } \cF^{-1} \Big( \exp \big(\int_s^t \psi(r,\xi)dr \big) \Big)(x), \quad K(t,s,x)=(-\Delta)^{\gamma/2}p(t,s,x).
$$

In this section we prove that $K(t,s,x)$ satisfies Assumptions \ref{as 1} and \ref{as 2} using the
 following auxiliary results.

\begin{lemma}   \label{frac est}
Let   $h \in C^2(\fR^d \setminus \{0\})$ satisfy
\begin{align}   \label{ck}
|h(x)| \leq N_0|x|^\varsigma e^{-c|x|^\gamma},  \quad \forall x\in\fR^{d} \setminus \{0\},
\end{align}
with some    constants $c, N_0>0$, $\varsigma > \eta-\frac{d}{2}$ and $\gamma > 0$. Further assume that either
\begin{align}           \label{ddck}
 \eta\in [0,1)\quad \text{and}\quad \big|D h(x) \big| \leq N_0|x|^{\varsigma-1} e^{-c|x|^\gamma},  \quad \forall x \in \fR^{d} \setminus \{0\}
\end{align}
 or
 \begin{equation}
              \label{ddck2}
\eta\in [1,2) \quad \text{and}\quad \big|D^2h(x) \big| \leq N_0|x|^{\varsigma-2} e^{-c|x|^\gamma},  \quad \forall x \in \fR^{d} \setminus \{0\}
\end{equation}
 holds.
 Then
$$
\|(-\Delta)^{\eta/2}h\|_{L_2(\fR^d)} < N< \infty,
$$
where  $N=N(N_0,\eta,c,\varsigma,\gamma)$.
\end{lemma}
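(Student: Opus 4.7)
The plan is to reduce the claim to the pointwise estimate
$$
|(-\Delta)^{\eta/2}h(x)|\leq C\,|x|^{\varsigma-\eta}\,e^{-c'|x|^{\gamma}},\qquad x\in\fR^{d}\setminus\{0\},
$$
for some $0<c'<c$ and a constant $C=C(N_{0},\eta,c,\varsigma,\gamma)$. Once this is in hand the assertion follows at once, because the hypothesis $\varsigma>\eta-d/2$ is equivalent to $2(\varsigma-\eta)+d>0$, i.e., $|x|^{2(\varsigma-\eta)}$ is integrable in a neighbourhood of the origin, while $e^{-2c'|x|^{\gamma}}$ handles integrability at infinity. The trivial case $\eta=0$ reduces to showing $h\in L_{2}(\fR^{d})$, so I assume $\eta\in(0,2)$ below.

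To produce the pointwise bound I would use the singular-integral representation
$$
(-\Delta)^{\eta/2}h(x)=c_{d,\eta}\int_{\fR^{d}}\frac{h(x)-h(x+y)}{|y|^{d+\eta}}\,dy\qquad(\eta\in(0,1))
$$
and the symmetric version
$$
(-\Delta)^{\eta/2}h(x)=c_{d,\eta}\int_{\fR^{d}}\frac{2h(x)-h(x+y)-h(x-y)}{2\,|y|^{d+\eta}}\,dy\qquad(\eta\in[1,2)),
$$
and then split the $y$-integration into the near region $\{|y|\leq|x|/2\}$ and the far region $\{|y|>|x|/2\}$. In the near region, Taylor's theorem combined with \eqref{ddck} (resp.\ \eqref{ddck2}) yields $|h(x+y)-h(x)|\leq C|y|\,|x|^{\varsigma-1}e^{-c'|x|^{\gamma}}$ (resp.\ $|2h(x)-h(x+y)-h(x-y)|\leq C|y|^{2}|x|^{\varsigma-2}e^{-c'|x|^{\gamma}}$), because on the annulus $\{z:|z-x|\leq|x|/2\}$ one has $|z|\asymp|x|$ and $e^{-c|z|^{\gamma}}\leq e^{-(c/2^{\gamma})|x|^{\gamma}}$. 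Integrating $|y|^{k-d-\eta}$ over $\{|y|\leq|x|/2\}$ (with $k=1$ or $2$) gives a factor proportional to $|x|^{k-\eta}$, which is finite since $k>\eta$ in each case, so the near contribution is dominated by $C|x|^{\varsigma-\eta}e^{-c'|x|^{\gamma}}$.

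In the far region $\{|y|>|x|/2\}$ the integrand is no longer singular at $y=0$, so only the decay \eqref{ck} of $h$ is needed. Bounding $|h(x)|$ directly and changing variables $z=x\pm y$ for the integrals involving $|h(x\pm y)|$, a case-by-case split according to whether $|z|\leq|x|/2$, $|z|\asymp|x|$, or $|z|\geq 3|x|/2$ again produces the same bound $C|x|^{\varsigma-\eta}e^{-c'|x|^{\gamma}}$ after a routine computation. This far-region bookkeeping is the main obstacle: the margin is tight when $\varsigma$ is close to $\eta-d/2$, so constants must be tracked carefully and the exponential factor must be exploited both to compare $|z|$ with $|x|$ and to render the integrals at infinity finite. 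Assembling the near and far estimates yields the pointwise bound, and squaring and integrating then produces the claim with $N=N(N_{0},\eta,c,\varsigma,\gamma)$.
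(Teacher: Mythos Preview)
Your decomposition---singular-integral formula, near/far split at $|y|=|x|/2$, Taylor in the near region---is exactly the paper's, and the near-region estimate is fine. The issue is the pointwise bound you announce: it is too strong and in fact false. In the far region, after the change of variables $z=x+y$, the piece with $|z|\leq |x|/2$ contributes
\[
\int_{|z|\leq |x|/2}\frac{|h(z)|}{|x-z|^{d+\eta}}\,dz
\ \leq\ \frac{2^{d+\eta}}{|x|^{d+\eta}}\int_{\fR^{d}}|z|^{\varsigma}e^{-c|z|^{\gamma}}\,dz
\ \sim\ \frac{C}{|x|^{d+\eta}},
\]
and this cannot be dominated by $C|x|^{\varsigma-\eta}e^{-c'|x|^{\gamma}}$ for large $|x|$: the right-hand side decays exponentially, the left only polynomially. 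The reason is simply that the mass of $h$ sits near the origin, a distance $\sim|x|$ away, and there is no exponential smallness there.

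The fix is easy: replace your single pointwise bound by the two-regime estimate
\[
|(-\Delta)^{\eta/2}h(x)|\ \leq\ C\bigl(|x|^{\varsigma-\eta}\,I_{|x|\leq 1}+|x|^{-(d+\eta)}\,I_{|x|>1}\bigr),
\]
which your computation actually delivers once you stop trying to force the exponential. Both pieces are square-integrable (the first because $\varsigma>\eta-d/2$, the second because $2(d+\eta)>d$), so the lemma follows. This is precisely what the paper does: its far-region term $\cI_{1}$ is bounded by $N|x|^{\varsigma-\eta}$ for $|x|<1$ and by $N|x|^{-(d+\eta)}$ for $|x|\geq1$, with no exponential factor claimed in the second case. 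The paper also first reduces to $\varsigma<\eta$ (harmless, since the hypotheses persist when $\varsigma$ is decreased), which streamlines the small-$|x|$ computation.
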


\begin{proof}
We assume $\eta \in (0,2)$ since the statement is obvious if $\eta=0$. We further assume  $\varsigma<\eta$ because if
(\ref{ck})-(\ref{ddck2}) hold for some $\varsigma$ then they hold for any $\varsigma'\leq \varsigma$ (with other constant $N_0$).

{\bf{Case 1}}. Suppose (\ref{ddck}) holds.
Let   $C=C(\eta)>0$ be the constant  such that
$$
-(-\Delta)^{\eta/2} h(x) = C\lim_{\varepsilon \to 0} \int_{|y| \geq \varepsilon}  \frac{ h(x+y) - h(x)}{|y|^{d+\eta}}~dy=\cI(x)+\cJ(x),
$$
where
$$
\cI(x)= C \int_{|y| \geq |x|/2}  \frac{ h(x+y) - h(x)}{|y|^{d+\eta}}~dy
$$
and
$$
\cJ(x)= C\lim_{\varepsilon \to 0}\int_{|x|/2>|y| \geq \varepsilon}  \frac{ h(x+y) - h(x)}{|y|^{d+\eta}}~dy.
$$
Obviously,
\begin{align*}
|\cI(x)|
\leq  C\int_{|y| \geq |x|/2}  \frac{|h(x+y)|}{|y|^{d+\eta}}~dy +C\int_{|y| \geq |x|/2}  \frac{ |h(x)|}{|y|^{d+\eta}}~dy=:\cI_1(x) + \cI_2(x).
\end{align*}
Recall $\eta > \varsigma$. From \eqref{ck}, if $|x| < 1$
\begin{align}
\cI_{1}
\notag \leq C \int_{|y| \geq |x|/2}  \frac{ |x+y|^\varsigma }{|y|^{d+\eta}}~dy &= C|x|^{-\eta}\int_{|y| \geq 1/2}  \frac{ |x+|x|y|^{\varsigma} }{|y|^{d+\eta}}~dy \\
\notag &\leq C |x|^{\varsigma - \eta} \sup_{|w|=1}\int_{ |y| \geq 1/2}  \frac{ |w+y|^\varsigma }{|y|^{d+\eta}}~dy \\
 &\leq N|x|^{\varsigma - \eta},  \label{i1}
\end{align}
where the last inequality is from the condition $\varsigma>\eta-d/2>-d$. On the other hand, if $|x| \geq 1$ (recall $\varsigma>-d$ and $\gamma>0$)
\begin{align}
\cI_1
\notag &\leq C\int_{|y| \geq |x|/2}  \frac{ |x+y|^{\varsigma} e^{-c|x+y|^\gamma}}{|y|^{d+\eta}}~dy \\
\label{i12} &\leq N\frac{1}{|x|^{d+\eta}} \int_{\fR^d}  |y|^{\varsigma} e^{-c|y|^\gamma}~dy
\leq N\frac{1}{|x|^{d+\eta}}.
\end{align}
Also, using \eqref{ck} again, we get
\begin{align}
\label{i2} \cI_2 \leq  N|x|^{\varsigma-\eta}  e^{-c|x|^\gamma}, \quad \quad  \forall\, x \in \fR^d.
\end{align}

To estimate $\cJ$, we use Taylor's theorem and get
\begin{align*}
|\cJ|
\leq  N\int_{|y|<|x|/2} \left|\nabla h(x+\bar \theta y) \right| \frac{1}{|y|^{d-1+\eta}}~dy,
\end{align*}
where $0 \leq \bar \theta \leq 1$.
So from \eqref{ddck},
\begin{align}
\label{j} |\cJ|
\leq N|x|^{\varsigma - \eta} e^{-c(\frac{1}{2})^{\gamma}|x|^\gamma}, \quad \quad \forall\, x \in \fR^d.
\end{align}
Therefore by \eqref{i1}, \eqref{i12}, \eqref{i2}, and \eqref{j}, we have
\begin{align*}
\int_{\fR^d}|(-\Delta)^{\eta/2} h( x)|^2 ~dx < \infty
\end{align*}
because $ \varsigma > \eta-\frac{d}{2}$ and $\gamma >0$.

{\bf{Case 2}}. Suppose (\ref{ddck2}) holds. The proof for this case  is very close to  Case 1.
It is enough to repeat the above proof, but  in order to estimate $\cJ$  we  use the second order Taylor's theorem in stead of the first order one.
\end{proof}


Before going further, for the simplicity of presentation we define
\begin{align*}
q_1(t,s,x)=I_{ s<t } \cF^{-1} \Big( \exp \big(\int_s^t \psi(r,(t-s)^{-1/\gamma}\xi)dr \big) \Big)(x),
\end{align*}
and
\begin{align*}
&q_2(t,s,x) \\
&=(t-s)I_{ s<t } \cF^{-1} \Big( \psi(t,(t-s)^{-1/\gamma}\xi) |\xi|^\gamma \exp \big(\int_s^t \psi(r,(t-s)^{-1/\gamma}\xi)dr \big) \Big)(x).
\end{align*}
There are following relations among $p$, $q_1$, and $q_2$ :
$$
(t-s)^{d/\gamma}p(t,s,(t-s)^{1/\gamma}x)=q_1(t,s,x),
$$
\begin{align}
                    \label{rela p q1}
(t-s)^{d/\gamma} (t-s)\Delta^{\gamma/2}p(t,s,(t-s)^{1/\gamma}x)=\Delta^{\gamma/2}q_1(t,s,x),
\end{align}
and
\begin{align}
                    \label{rela p q2}
\frac{\partial}{\partial t} \Delta^{\gamma/2} p(t,s,x) =(t-s)^{-d/\gamma}(t-s)^{-2}q_2(t,s,(t-s)^{-1/\gamma}x).
\end{align}

These kernels have uniform upper bounds.
\begin{lemma}       \label{ker bnd}
It holds that
\begin{align*}
\sup_{t>s,x}|\Delta^{\gamma/2} q_1(t, s, x)| < \infty,
\end{align*}
\begin{align*}
\sup_{t>s,x}|\frac{\partial}{\partial x^i}\Delta^{\gamma/2} q_1(t, s, x)| < \infty,
\end{align*}
and
\begin{align*}
\sup_{t>s,x}| q_2(t, s, x)| < \infty.
\end{align*}
\end{lemma}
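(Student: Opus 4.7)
The plan is to estimate each of the three kernels by the Fourier inversion formula combined with the trivial bound $\|\cF^{-1}(g)\|_{L_\infty}\leq (2\pi)^{-d}\|g\|_{L_1(\fR^d)}$. The central observation is that, thanks to the scaling $\xi\mapsto(t-s)^{-1/\gamma}\xi$ built into the definitions of $q_1$ and $q_2$, the real-part estimate in (\ref{A1}) yields
$$
\Re\int_s^t \psi\bigl(r,(t-s)^{-1/\gamma}\xi\bigr)\,dr\leq -\nu(t-s)(t-s)^{-1}|\xi|^\gamma=-\nu|\xi|^\gamma,
$$
so that the exponential factor common to both symbols obeys the $(t,s)$-free Gaussian-type bound
$$
\Bigl|\exp\Bigl(\int_s^t \psi(r,(t-s)^{-1/\gamma}\xi)\,dr\Bigr)\Bigr|\leq e^{-\nu|\xi|^\gamma}.
$$
Thus everything reduces to showing that each of the three Fourier symbols, after this estimate, is dominated by an integrable function of $\xi$ whose $L_1$-norm depends only on $\nu$, $\gamma$, $d$.

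For $\Delta^{\gamma/2}q_1$, the Fourier multiplier $|\xi|^\gamma\exp(\int_s^t\psi\,dr)$ is pointwise at most $|\xi|^\gamma e^{-\nu|\xi|^\gamma}$. For $\partial_{x^i}\Delta^{\gamma/2}q_1$, an extra factor of $i\xi^i$ appears on the frequency side, giving the envelope $|\xi|^{\gamma+1}e^{-\nu|\xi|^\gamma}$. For $q_2$, I use the second half of (\ref{A1}) with $\alpha=0$, which after rescaling gives $|\psi(t,(t-s)^{-1/\gamma}\xi)|\leq\nu^{-1}(t-s)^{-1}|\xi|^\gamma$; the $(t-s)$ prefactor in the definition of $q_2$ therefore cancels, and the multiplier is dominated by $\nu^{-1}|\xi|^{2\gamma}e^{-\nu|\xi|^\gamma}$. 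All three dominating functions lie in $L_1(\fR^d)$ for every $\gamma>0$: the Gaussian-type tail $e^{-\nu|\xi|^\gamma}$ annihilates any polynomial at infinity, while $|\xi|^{\gamma}$, $|\xi|^{\gamma+1}$, $|\xi|^{2\gamma}$ are locally integrable at the origin.

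There is no genuine obstacle here; the lemma is essentially a bookkeeping exercise. The uniformity in $t>s$ and $x\in\fR^d$ is automatic, since the rescaling embedded in $q_1,q_2$ absorbs all dependence on $t-s$ into the universal envelope $e^{-\nu|\xi|^\gamma}$, and $\|\cF^{-1}(g)\|_{L_\infty}\leq(2\pi)^{-d}\|g\|_{L_1}$ is pointwise in $x$. The only mild care needed is that $\psi$ is assumed to have only finitely many $\xi$-derivatives, but since we never differentiate $\psi$ in $\xi$ at this stage, nothing beyond the two inequalities in (\ref{A1}) with $\alpha=0$ enters the argument.
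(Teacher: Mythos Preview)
Your proof is correct and is exactly what the paper intends: the paper's own proof consists of the single sentence ``From the properties of the Fourier transform, these are easy consequences of \eqref{A1},'' and you have simply spelled out those easy consequences in full detail. There is nothing to add or correct.
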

\begin{proof}
From the properties of the Fourier transform, these are easy consequences of \eqref{A1}.
The lemma is proved.
\end{proof}

\begin{lemma}   \label{ker int fin}
Let $0<\delta < \Big(\frac{1}{2} \wedge \gamma \Big)$.
Then
\begin{align}           \label{lemma 531}
\sup_{t>s} \int_{\fR^d} \Big| |x|^{\frac{d}{2} +\delta }|\Delta^{\gamma/2} q_1(t, s, x)| \Big|^2~dx < \infty,
\end{align}
\begin{align}               \label{lemma 532}
\sup_{t>s} \int_{\fR^d} \Big| |x|^{\frac{d}{2}+\delta}|\frac{\partial}{\partial x^i}\Delta^{\gamma/2} q_1(t, s, x)| \Big|^2~dx < \infty,
\end{align}
and
\begin{align}               \label{lemma 533}
\sup_{t>s}\int_{\fR^d} \Big| |x|^{\frac{d}{2}+\delta}|q_2(t, s, x)| \Big|^2~dx < \infty.
\end{align}
\end{lemma}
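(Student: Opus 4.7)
The plan is to use Plancherel's identity to convert the weighted $L^2$ bound in $x$ into an $L^2$ bound for a fractional derivative of $\hat h$, where $h(t,s,\cdot)$ denotes the kernel in question, and then to invoke Lemma \ref{frac est} applied to $\hat h$ itself.

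By Plancherel, for each of the three kernels $h$ appearing in (\ref{lemma 531})--(\ref{lemma 533}),
$$
\int_{\fR^d}\bigl|\,|x|^{d/2+\delta}h(x)\,\bigr|^2\,dx \;=\; c_d\,\bigl\|(-\Delta_\xi)^{(d/2+\delta)/2}\hat h\bigr\|_{L^2(\fR^d)}^2,
$$
so the task reduces to an $L^2$ estimate for the fractional Laplacian of $\hat h$ in $\xi$. The Fourier transforms are explicit: for (\ref{lemma 531}), $\hat h(\xi)=\pm|\xi|^\gamma \exp(G(\xi))$ with $G(\xi):=\int_s^t \psi(r,(t-s)^{-1/\gamma}\xi)\,dr$; for (\ref{lemma 532}), $\hat h$ acquires an extra factor $i\xi^j$; and for (\ref{lemma 533}), $\hat h$ has an additional factor $(t-s)\psi(t,(t-s)^{-1/\gamma}\xi)$, whose modulus is at most $\nu^{-1}|\xi|^\gamma$ by Assumption \ref{as 7.8}.

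Next I would extract pointwise bounds on $\hat h$ and its $\xi$-derivatives that are uniform in $t>s$. From $\Re\psi(r,\eta)\leq -\nu|\eta|^\gamma$ one gets $|\exp(G(\xi))|\leq e^{-\nu|\xi|^\gamma}$, and the chain rule together with $|D^\alpha\psi(r,\eta)|\leq \nu^{-1}|\eta|^{\gamma-|\alpha|}$ gives
$$
|D^\alpha_\xi G(\xi)|\;\leq\;\int_s^t (t-s)^{-|\alpha|/\gamma}\bigl|(D^\alpha\psi)(r,(t-s)^{-1/\gamma}\xi)\bigr|\,dr\;\leq\;\nu^{-1}|\xi|^{\gamma-|\alpha|},
$$
for $1\leq|\alpha|\leq \lfloor d/2\rfloor+1$, where the $(t-s)$ scalings cancel cleanly against the bound in (\ref{A1}). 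Leibniz's formula and the Fa\`a di Bruno expansion applied to the product $|\xi|^\gamma\exp(G(\xi))$ (with obvious modifications for (\ref{lemma 532}) and (\ref{lemma 533})) then yield
$$
|D^\alpha_\xi \hat h(\xi)|\;\leq\; C\,|\xi|^{\gamma-|\alpha|}\,e^{-\nu|\xi|^\gamma}, \qquad |\alpha|\leq \lfloor d/2\rfloor+1,
$$
with $C$ independent of $t>s$. Finally I apply Lemma \ref{frac est} to $\hat h$ with $\eta := d/2+\delta$ and $\varsigma := \gamma$. The hypothesis $\varsigma>\eta-d/2$ is the inequality $\gamma>\delta$, which is precisely part of the assumption $0<\delta<\tfrac12\wedge\gamma$; the pointwise bounds above supply (\ref{ck}) and either (\ref{ddck}) or (\ref{ddck2}) according as $d/2+\delta$ lies in $[0,1)$ or $[1,2)$.

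I expect the main obstacle to be the restriction $\eta<2$ in Lemma \ref{frac est}, which at face value covers only $d\leq 3$. For $d\geq 4$ one decomposes $d/2+\delta=2k+\eta$ with $\eta\in[0,2)$ and $k\in\bN$, uses
$$
\|(-\Delta_\xi)^{(d/2+\delta)/2}\hat h\|_{L^2}\;=\;\|(-\Delta_\xi)^{\eta/2}\bigl((-\Delta_\xi)^k\hat h\bigr)\|_{L^2},
$$
and applies Lemma \ref{frac est} to $(-\Delta_\xi)^k\hat h$; the requisite pointwise bounds on this function and its first two derivatives follow by iterating the Leibniz/Fa\`a di Bruno calculation, and the delicate point is to verify that the order of derivatives of $\psi$ afforded by (\ref{A1}) is exactly tuned to produce the needed bounds uniformly in $t-s$, via the scaling invariance observed above.
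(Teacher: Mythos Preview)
Your proposal is correct and matches the paper's proof essentially step for step: Parseval to pass to the Fourier side, then the decomposition $(-\Delta_\xi)^{(d/4+\delta/2)}=(-\Delta_\xi)^{\eta/2}(-\Delta_\xi)^{\lfloor d/4\rfloor}$ with $\eta=d/2-2\lfloor d/4\rfloor+\delta\in(0,2)$, followed by Lemma~\ref{frac est} applied to $(-\Delta_\xi)^{\lfloor d/4\rfloor}\hat h$ with $\varsigma=\gamma-2\lfloor d/4\rfloor$ (respectively $\gamma+1-2\lfloor d/4\rfloor$ and $2\gamma-2\lfloor d/4\rfloor$ for (\ref{lemma 532}) and (\ref{lemma 533})). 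Your remark that the derivative budget $|\alpha|\le\lfloor d/2\rfloor+1$ in (\ref{A1}) is exactly what is needed is the crux: one checks case by case on $d\bmod 4$ that either (\ref{ddck}) or (\ref{ddck2}) is invoked so that the highest derivative of $\psi$ used is precisely of order $\lfloor d/2\rfloor+1$.
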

\begin{proof}
First we prove \eqref{lemma 531}. Let $t>s$. By Parseval's identity,
\begin{align*}
&\int_{\fR^d} \Big||x|^{\frac{d}{2} + \delta  }|\Delta^{\gamma/2} q_1(t, s, x)| \Big|^2 ~dx  \\
&=N\int_{\fR^d} \Big|\Delta^{\frac{d}{4}-\lfloor \frac{d}{4} \rfloor
+\frac{\delta}{2}} \Delta^{\lfloor \frac{d}{4} \rfloor}\Big(|\xi|^\gamma  \exp \big(\int_s^t \psi(r,(t-s)^{-1/\gamma}\xi)~dr \big) \Big) \Big|^2 ~dx .
\end{align*}

We  apply Lemma \ref{frac est} with
$$
 \eta=d/2-2\lfloor \frac{d}{4} \rfloor +\delta, \quad \varsigma=\gamma-2\lfloor \frac{d}{4} \rfloor, \quad c=\nu
 $$
 and
 $$
 h(x)=\Delta^{\lfloor \frac{d}{4} \rfloor} \Big(|\xi|^\gamma  \exp \big(\int_s^t \psi(r,(t-s)^{-1/\gamma}\xi)~dr \big) \Big).
$$
Note that since $\gamma>\delta$, we have $\varsigma>\eta-d/2$.
Also, by \eqref{A1},
\begin{align*}
\Big| \Delta^{\lfloor \frac{d}{4} \rfloor} \Big(|\xi|^\gamma  \exp \big(\int_s^t \psi(r,(t-s)^{-1/\gamma}\xi)~dr \big) \Big) \Big|
\leq N |\xi|^{\gamma - 2\lfloor \frac{d}{4} \rfloor} e^{-\nu |\xi|^\gamma}.
\end{align*}
Thus (\ref{ck}) is satisfied with the above setting.

One can easily check that
$$
\eta\in \begin{cases} [0,1), & \text{if}\,\, d=4k, 4k+1\,\,\text{for some integer}\,k\\
[1,2), & \text{otherwise}
\end{cases}
$$
Therefore it is enough to prove (\ref{ddck}) if $d=4k$ or  $4k+1$ for some integer $k$ and  (\ref{ddck2}) for the other case. These are easy consequences of
\eqref{A1}, that is, we have
\begin{align*}
\Big| D^1 \Delta^{\lfloor \frac{d}{4} \rfloor} \Big(|\xi|^\gamma  \exp \big(\int_s^t \psi(r,(t-s)^{-1/\gamma}\xi)~dr \big) \Big) \Big|
\leq N |\xi|^{\gamma - 2\lfloor \frac{d}{4} \rfloor-1} e^{-\nu |\xi|^\gamma},
\end{align*}
and
\begin{align*}
\Big|D^2 \Delta^{\lfloor \frac{d}{4} \rfloor} \Big(|\xi|^\gamma  \exp \big(\int_s^t \psi(r,(t-s)^{-1/\gamma}\xi)~dr \big) \Big) \Big|
\leq N |\xi|^{\gamma - 2\lfloor \frac{d}{4} \rfloor-2} e^{-\nu |\xi|^\gamma}.
\end{align*}
Hence (\ref{lemma 531}) is proved.

Both \eqref{lemma 532} and \eqref{lemma 533} can be proved similarly. We only remark  main differences.
Due to \eqref{A1}, for any $i=1,2,3,\ldots,d$ and multi-index $|\beta| \leq \lfloor \frac{d}{2} \rfloor+1$
\begin{align*}
\Big| D^\beta \Big(|\xi|^\gamma \xi^i \exp \big(\int_s^t \psi(r,(t-s)^{-1/\gamma}\xi)~dr \big) \Big) \Big|
\leq N |\xi|^{\gamma +1 - |\beta|} e^{-\nu |\xi|^\gamma}
\end{align*}
and
\begin{align*}
&\Big| D^\beta \Big(\psi(s,(t-s)^{-1/\gamma} \xi)|\xi|^{\gamma} \exp \big(\int_s^t \psi(r,(t-s)^{-1/\gamma}\xi)~dr \big) \Big) \Big| \\
&\leq N (t-s)^{-1} |\xi|^{2\gamma - |\beta|} e^{-\nu |\xi|^\gamma}.
\end{align*}
Hence for \eqref{lemma 532} we  apply Lemma \ref{frac est} with
$$
 \eta=d/2-2\lfloor \frac{d}{4} \rfloor +\delta, \quad \varsigma=\gamma+1-2\lfloor \frac{d}{4} \rfloor, \quad c=\nu
 $$
 and
 $$
 h(x)=\Delta^{\lfloor \frac{d}{4} \rfloor} \Big(|\xi|^\gamma \xi^i \exp \big(\int_s^t \psi(r,(t-s)^{-1/\gamma}\xi)~dr \big) \Big).
$$
On the other hand for \eqref{lemma 533} we  apply Lemma \ref{frac est} with
$$
 \eta=d/2-2\lfloor \frac{d}{4} \rfloor +\delta, \quad \varsigma = 2\gamma-2\lfloor \frac{d}{4} \rfloor, \quad c=\nu
 $$
 and
 $$
 h(x)=\Delta^{\lfloor \frac{d}{4} \rfloor} \Big( (t-s)\psi(s,(t-s)^{-1/\gamma} \xi) |\xi|^{\gamma}  \exp \big(\int_s^t \psi(r,(t-s)^{-1/\gamma}\xi)~dr \big) \Big).
$$
We skip the  details. The lemma is proved.
\end{proof}

By making full use of above lemmas, we obtain kernel estimates for $\Delta^{\gamma/2}p(t,s, x)$.

\begin{lemma}           \label{lemma 54}
There exist constant $N>0$ and $\varepsilon\in (0,1)$ such that for all $s>r$,  and $c>0$
\begin{align*}
\int_r^s \int_{|z| \geq c} |\Delta^{\gamma/2}p(s,\tau, z)| ~dz d\tau \leq N(s-r)^{\varepsilon} c^{-\varepsilon \gamma}.
\end{align*}
\end{lemma}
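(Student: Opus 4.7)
The plan is to exploit the scaling identity \eqref{rela p q1} to reduce everything to tail estimates on the universal kernel $\Delta^{\gamma/2}q_1$, and then combine the uniform pointwise bound from Lemma \ref{ker bnd} with the weighted $L_2$ bound from Lemma \ref{ker int fin} via a Cauchy--Schwarz argument.

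First I would rewrite \eqref{rela p q1} in the form
\begin{equation*}
\Delta^{\gamma/2}p(s,\tau,z) = (s-\tau)^{-d/\gamma-1}\,\Delta^{\gamma/2}q_1\bigl(s,\tau,(s-\tau)^{-1/\gamma}z\bigr),
\end{equation*}
and change variables $y=(s-\tau)^{-1/\gamma}z$, which gives
\begin{equation*}
\int_{|z|\geq c}|\Delta^{\gamma/2}p(s,\tau,z)|\,dz = (s-\tau)^{-1}\int_{|y|\geq c(s-\tau)^{-1/\gamma}}|\Delta^{\gamma/2}q_1(s,\tau,y)|\,dy.
\end{equation*}
Thus the problem reduces to a uniform-in-$(s,\tau)$ tail estimate
\begin{equation*}
\int_{|y|\geq R}|\Delta^{\gamma/2}q_1(s,\tau,y)|\,dy \leq N R^{-\delta}
\end{equation*}
for some $\delta>0$.

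Second, I would fix $\delta\in(0,\tfrac{1}{2}\wedge\gamma)$ and apply the Cauchy--Schwarz inequality with the weight $|y|^{d/2+\delta}$:
\begin{equation*}
\int_{|y|\geq R}|\Delta^{\gamma/2}q_1|\,dy \leq \left(\int_{\fR^d}|y|^{d+2\delta}|\Delta^{\gamma/2}q_1|^2\,dy\right)^{1/2}\left(\int_{|y|\geq R}|y|^{-d-2\delta}\,dy\right)^{1/2}.
\end{equation*}
The first factor is uniformly bounded by \eqref{lemma 531} of Lemma \ref{ker int fin}, and the second is $N R^{-\delta}$ (since $\delta>0$). This yields exactly the desired tail bound with constant independent of $s,\tau$.

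Third, inserting the tail bound into the scaling identity gives
\begin{equation*}
\int_{|z|\geq c}|\Delta^{\gamma/2}p(s,\tau,z)|\,dz \leq N(s-\tau)^{-1+\delta/\gamma}c^{-\delta}.
\end{equation*}
Since $\delta<\gamma$, the exponent $-1+\delta/\gamma\in(-1,0)$ is integrable on $(r,s)$, and integration in $\tau$ produces
\begin{equation*}
\int_r^s\int_{|z|\geq c}|\Delta^{\gamma/2}p(s,\tau,z)|\,dz\,d\tau \leq N(s-r)^{\delta/\gamma}c^{-\delta},
\end{equation*}
which is the claimed estimate with $\varepsilon:=\delta/\gamma\in(0,1)$.

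I do not anticipate a serious obstacle here: the ingredients (scaling, uniform pointwise bound, and weighted $L_2$ bound) are already in place. The only point requiring a little care is choosing $\delta$ inside $(0,\tfrac{1}{2}\wedge\gamma)$ so that both Lemma \ref{ker int fin} applies and $\varepsilon=\delta/\gamma<1$; any such $\delta$ works, so the proof goes through cleanly.
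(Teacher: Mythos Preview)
Your proof is correct and follows essentially the same route as the paper: scale via \eqref{rela p q1}, then control the tail of $\Delta^{\gamma/2}q_1$ using Cauchy--Schwarz together with the weighted $L_2$ bound \eqref{lemma 531}, and finally integrate the resulting power of $(s-\tau)$. The only cosmetic difference is that the paper inserts the weight $|z|^{\varepsilon\gamma}$ directly (and splits $|z|<1$ versus $|z|\ge 1$, invoking Lemma \ref{ker bnd} near the origin), whereas you phrase it as a tail estimate $\int_{|y|\ge R}\le NR^{-\delta}$; the two are equivalent and your version is slightly cleaner since it avoids the split.
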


\begin{proof}
From \eqref{rela p q1},
\begin{align*}
\int_{|z| \geq c} |\Delta^{\gamma/2}p(s,\tau, z)| ~dz
=(s-\tau)^{-1}\int_{(s-\tau)^{1/\gamma}|z| \geq c} |\Delta^{\gamma/2} q_1(s,\tau, z)| ~dz.
\end{align*}
For $0<\varepsilon <1$, if $(s-\tau)^{1/\gamma}|z| \geq c$, then
$$
(s-\tau)^{-1} \leq (s-\tau)^{-1+\varepsilon} \big(\frac{|z|}{c} \big)^{\varepsilon \gamma}.
$$
Therefore
\begin{align} \label{lemma54 suf}
\int_r^s \int_{|z| \geq c} |\Delta^{\gamma/2}p(s,\tau, z)| ~dz d\tau
\leq
c^{-\varepsilon \gamma}\int_r^s (s-\tau)^{-1+\varepsilon} \int_{\fR^d} |z|^{\varepsilon \gamma}|\Delta^{\gamma/2} q_1(s,\tau, z)| ~dz d\tau.
\end{align}
We claim
\begin{align*}
\sup_{s>\tau>0}\int_{\fR^d} |z|^{\varepsilon \gamma}|\Delta^{\gamma/2} q_1(s,\tau, z)| ~dz < \infty.
\end{align*}
By Lemma \ref{ker bnd} and H\"older's inequality,
\begin{align*}
& \int_{\fR^d} |z|^{\varepsilon \gamma}|\Delta^{\gamma/2} q_1(s,\tau, z)| ~dz  \\
&\leq \int_{|z| <1 } |z|^{\varepsilon \gamma}|\Delta^{\gamma/2} q_1(s,\tau, z)| ~dz+\int_{|z| \geq 1} |z|^{\varepsilon \gamma}|\Delta^{\gamma/2} q_1(s,\tau, z)| ~dz \\
&\leq N+N \Big( \int_{|z| \geq 1} |z|^{-d-\varepsilon \gamma }~dz \Big)^{1/2}
\Big( \int_{|z| \geq 1} \Big||z|^{\frac{d+3\varepsilon \gamma}{2}}|\Delta^{\gamma/2} q_1(s,\tau, z)| \Big|^2 ~dz  \Big)^{1/2}.
\end{align*}
Due to Lemma \ref{ker int fin} (i) with small $\varepsilon$ so that $\frac{3\varepsilon \gamma}{2} < \Big(\frac{1}{2} \wedge \gamma \Big)$,
\begin{align*}
\sup_{s > \tau} &\int_{\fR^d} \Big||z|^{\frac{d+3\varepsilon \gamma}{2}}|\Delta^{\gamma/2} q_1(s,\tau, z)| \Big|^2 ~dz < \infty.
\end{align*}
Therefore, the claim is proved. Going back to \eqref{lemma54 suf}, we conclude that
\begin{align*}
\int_r^s \int_{|z| \geq c} |\Delta^{\gamma/2}p(s,\tau, z)| ~dz d\tau
\leq
N c^{-\varepsilon \gamma}\int_r^s (s-\tau)^{-1+\varepsilon} d\tau
\leq c^{-\varepsilon \gamma} (s-r)^{\varepsilon}.
\end{align*}
The lemma is proved.
\end{proof}

\begin{lemma}           \label{lemma 55}
There exist constants $N>0$ and $\varepsilon\in (0,1)$ such that for all $s > r  >a$ and $h \in \fR^d$
\begin{align}       \label{lemma 551}
\int_{-\infty}^a \int_{\fR^d} \big|\Delta^{\gamma/2}p(s,\tau, z+h)-\Delta^{\gamma/2}p(s,\tau, z)\big| ~dz d\tau \leq N|h|(s - a)^{-1/\gamma}
\end{align}
and
\begin{align*}
\int_{-\infty}^a \int_{\fR^d} |\Delta^{\gamma/2}p(s,\tau, z)-  \Delta^{\gamma/2}p(r,\tau, z)| ~dz d\tau
\leq N(s-r)(r -a)^{-1}.
\end{align*}
\end{lemma}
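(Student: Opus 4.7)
Both inequalities are Lipschitz-type estimates, and the natural approach is to write the difference as an integral of a derivative via the fundamental theorem of calculus, then use the scaling identities \eqref{rela p q1} and \eqref{rela p q2} to reduce matters to uniform $L^1(\fR^d)$ bounds on $\nabla\Delta^{\gamma/2}q_1(s,\tau,\cdot)$ and on $q_2(s,\tau,\cdot)$. These $L^1$ bounds are precisely what Lemma \ref{ker bnd} (pointwise bound near the origin) and Lemma \ref{ker int fin} (weighted $L^2$ bound at infinity) were designed to yield, once combined via H\"older's inequality.

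\textbf{Spatial estimate \eqref{lemma 551}.} For each fixed $\tau<s$, I would write
$$\Delta^{\gamma/2}p(s,\tau,z+h)-\Delta^{\gamma/2}p(s,\tau,z)=\int_0^1 h\cdot\nabla_z\Delta^{\gamma/2}p(s,\tau,z+\theta h)\,d\theta,$$
take absolute values, integrate in $z$ (exploiting translation invariance to drop $\theta h$), and apply Fubini's theorem. This reduces matters to estimating $\int_{\fR^d}|\nabla\Delta^{\gamma/2}p(s,\tau,y)|\,dy$. Differentiating \eqref{rela p q1} in $x$ gives $\nabla\Delta^{\gamma/2}p(s,\tau,y)=(s-\tau)^{-d/\gamma-1-1/\gamma}\,(\nabla\Delta^{\gamma/2}q_1)(s,\tau,(s-\tau)^{-1/\gamma}y)$, and a change of variables $w=(s-\tau)^{-1/\gamma}y$ yields
$$\int_{\fR^d}|\nabla\Delta^{\gamma/2}p(s,\tau,y)|\,dy=(s-\tau)^{-1-1/\gamma}\int_{\fR^d}|\nabla\Delta^{\gamma/2}q_1(s,\tau,w)|\,dw.$$
To bound the last integral uniformly in $s>\tau$, I split $|w|\leq 1$ and $|w|>1$: on $|w|\leq 1$ use Lemma \ref{ker bnd}, and on $|w|>1$ use H\"older's inequality with weight $|w|^{d/2+\delta}$ for some $\delta\in(0,\tfrac12\wedge\gamma)$ together with \eqref{lemma 532}. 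Finally, integrating $(s-\tau)^{-1-1/\gamma}$ in $\tau$ over $(-\infty,a)$ produces $\gamma(s-a)^{-1/\gamma}$, giving \eqref{lemma 551}.

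\textbf{Time estimate.} The argument is entirely parallel with $t$ in place of $z$. By the fundamental theorem of calculus,
$$\Delta^{\gamma/2}p(s,\tau,z)-\Delta^{\gamma/2}p(r,\tau,z)=\int_r^s\frac{\partial}{\partial t}\Delta^{\gamma/2}p(t,\tau,z)\,dt,$$
and identity \eqref{rela p q2} followed by the substitution $w=(t-\tau)^{-1/\gamma}z$ gives
$$\int_{\fR^d}\Bigl|\frac{\partial}{\partial t}\Delta^{\gamma/2}p(t,\tau,z)\Bigr|\,dz=(t-\tau)^{-2}\int_{\fR^d}|q_2(t,\tau,w)|\,dw.$$
The inner integral is bounded uniformly in $t>\tau$ by the same split plus H\"older-plus-\eqref{lemma 533} argument. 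Then $\int_{-\infty}^a(t-\tau)^{-2}\,d\tau=(t-a)^{-1}\leq(r-a)^{-1}$ for $t\in[r,s]$, so integration in $t$ from $r$ to $s$ yields the bound $N(s-r)(r-a)^{-1}$.

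\textbf{Main obstacle.} The only nontrivial point is the uniform finiteness of the $L^1$ norms $\int|\nabla\Delta^{\gamma/2}q_1(s,\tau,w)|\,dw$ and $\int|q_2(s,\tau,w)|\,dw$; everything else is scaling bookkeeping and elementary integration in $\tau$. These $L^1$ bounds are handled through the standard H\"older/weight split described above, which is precisely the reason \eqref{lemma 532} and \eqref{lemma 533} in Lemma \ref{ker int fin} were stated with an exponent $\tfrac{d}{2}+\delta$ strictly exceeding $\tfrac{d}{2}$.
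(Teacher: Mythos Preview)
Your proposal is correct and follows essentially the same approach as the paper: mean-value/fundamental theorem of calculus to convert differences into derivatives, the scaling identities \eqref{rela p q1}--\eqref{rela p q2}, and the uniform $L^1$ bound on $\nabla\Delta^{\gamma/2}q_1$ and $q_2$ via the split at $|w|=1$ using Lemmas \ref{ker bnd} and \ref{ker int fin}. Your use of the integral form of the FTC (rather than the single-point mean-value form the paper uses for the time estimate) is in fact slightly cleaner, since it avoids the nuisance that the intermediate point $\theta$ depends on $z$ when one performs the spatial change of variables.
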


\begin{proof}
First we show \eqref{lemma 551}. From \eqref{rela p q1},
\begin{align}   \label{lem55 scal}
\frac{\partial}{\partial x^i}\Delta^{\gamma/2}p(s,\tau, z)
=(s-\tau)^{-d/\gamma}(s-\tau)^{-1-1/\gamma}\frac{\partial}{\partial x^i} \Delta^{\gamma/2}q_1(s,\tau, (s-\tau)^{-1/\gamma}z).
\end{align}
Fix $0 < \delta < \Big( \frac{1}{2} \wedge \gamma \Big)$.
Then by H\"older's inequality, Lemmas \ref{ker bnd} and \ref{ker int fin},
\begin{align*}
&\sup_{s>\tau}\Big((s-\tau)^{-d/\gamma}\int_{\fR^d} \Big|\frac{\partial}{\partial x^i} \Delta^{\gamma/2}q_1(s,\tau, (s-\tau)^{-1/\gamma}z) \Big| dz \Big)\\
&=\sup_{s>\tau}\int_{\fR^d} \Big|\frac{\partial}{\partial x^i} \Delta^{\gamma/2}q_1(s,\tau, z) \Big| dz \\
&\leq N+\sup_{s>\tau}\Big[ \Big(\int_{|z| \geq 1} |z|^{-d-2\delta}~dz\Big)^{1/2}
\Big(\int_{|z| \geq 1} \Big||z|^{d/2+\delta}\frac{\partial}{\partial x^i} \Delta^{\gamma/2}q_1(s,\tau, z) \Big|^2 dz \Big)^{1/2} \Big] \\
&< \infty.
\end{align*}
Therefore, by the mean-value theorem and \eqref{lem55 scal}
\begin{align*}
&\int_{-\infty}^a \int_{\fR^d} \big| \Delta^{\gamma/2}p(s,\tau, z+h)-\Delta^{\gamma/2}p(s,\tau, z) \big| ~dz d\tau  \\
&\leq |h|\int_{-\infty}^a \int_{\fR^d} |\nabla \Delta^{\gamma/2}p(s,\tau, z)| ~dz d\tau  \\
&\leq |h|\int_{-\infty}^a (s-\tau)^{-1-1/\gamma} \int_{\fR^d} \Big|\frac{\partial}{\partial x^i} \Delta^{\gamma/2}q_1(s,\tau, z) \Big|~dz d\tau \\
&\leq N|h|\int_{-\infty}^a (s-\tau)^{-1-1/\gamma} d\tau \leq N|h|\int_{s-a}^\infty \tau^{-1-1/\gamma} d\tau=N|h|(s - a)^{-1/\gamma}.
\end{align*}

In order to prove the second assertion, observe that by the mean-value theorem and \eqref{rela p q2},
\begin{align*}
&|\Delta^{\gamma/2}p(s,\tau, z)-  \Delta^{\gamma/2}p(r,\tau, z)| \\
&\leq |s-r||\frac{\partial}{\partial t}\Delta^{\gamma/2}p(\theta s + (1-\theta)r ,\tau, z)| \\
&\leq |s-r|(\theta s + (1-\theta)r-\tau)^{-d/\gamma -2}|q_2(\theta s + (1-\theta)r ,\tau, (\theta s + (1-\theta)r-\tau)^{-1/\gamma}z)|.
\end{align*}
Following the proof of the first assertion with Lemma \ref{ker int fin} (iii), we get
$$
\sup_{s > \tau, r >\tau, 0\leq \theta \leq 1}\int_{\fR^d} | q_2 (\theta s + (1-\theta)r ,\tau, z)|~dz <\infty.
$$
Therefore,
\begin{align*}
\int_{-\infty}^a \int_{\fR^d} |\Delta^{\gamma/2}p(s,\tau, z)-  \Delta^{\gamma/2}p(r,\tau, z)| ~dz d\tau
&\leq \int_{-\infty}^a \frac{|s-r|}{\big(\theta s + (1-\theta)r -\tau \big)^2} d\tau\\
&\leq |s-r|(r-a)^{-1}.
\end{align*}
The lemma is proved.
\end{proof}

{\bf Proof of Theorem \ref{pseudo thm}}

From Lemma \ref{lemma 54} and Lemma \ref{lemma 55}, it is proved that the kernel
$K(s,\tau,z):=\Delta^{\gamma/2}p(s,\tau, z)$
satisfies Assumption \ref{as 2}. Moreover, by the definition of the kernel,
\begin{align*}
\Big|\cF \Big(\Delta^{\gamma/2}p(t,s, \cdot) \Big)(\xi)\Big|
&\leq |\xi|^\gamma \Big|\exp \big(\int_s^{t} \psi(r,\xi)dr) \Big|  \\
&\leq  |\xi|^\gamma \exp \big( -\nu (t-s)|\xi|^\gamma )
\end{align*}
where the second inequality is due to \eqref{A1}. Hence $K(s,\tau,z)=\Delta^{\gamma/2}p(s,\tau, z)$ also satisfies assumption \ref{as 1} because obviously
$$
\sup_{\xi} \int_0^\infty |\xi|^\gamma \exp \big( -\nu t|\xi|^\gamma )~dt <\infty.
$$
Therefore, due to Theorem \ref{BMO theorem}, for any $p\geq 2$ it holds that
\begin{align}
                \label{befext}
\|\cG f\|_p\leq N\|f\|_p, \quad \forall f \in C_c^\infty(\fR^{d+1}).
\end{align}
Since the operator $f \to \cG f$ is linear and \eqref{befext} holds for all $f \in C_c^\infty(\fR^{d+1})$, the operator $\cG$ is extendible to a bounded linear operator  on $L_p(\fR^{d+1})$, and  \eqref{befext} holds for all $f \in L_p(\fR^{d+1})$.

Now assume $u\in C^{\infty}_c(\fR^{d+1})$. Denote $f:=u_t-A(t)u$. Then obviously $f\in L_p(\fR^{d+1})$. Thus to prove (\ref{7.9.1}) we only need to show $(-\Delta)^{\gamma/2}u=\cG f$.
Taking the Fourier transform to the equation $u_t-A(t)u=f$, one easily gets
$$
\hat{u}(t,\xi)=\int^t_{-\infty}e^{\int^t_s \psi(r,\xi)\,dr}\hat{f}(s,\xi)\,ds.
$$
This and the inverse Fourier transform certainly lead to
\begin{align}
                \label{sol equal}
u(t,x)=\int^t_{-\infty}p(t,s,\cdot)\ast f(s,\cdot)(x)ds, \quad (-\Delta)^{\gamma/2}u=\cG f.
\end{align}
These equalities are  because $f$ has compact support and is sufficiently smooth with respect to $x$ uniformly in $t$ (cf. Remark \ref{well define g}).

Next we
prove (\ref{7.9.1}) for $p\in (1,2)$ by using the duality argument. Let $q\in (2,\infty)$ be the conjugate of $p$. Consider the kernel
\begin{align*}
P(t,s,x)=K(-s,-t,x)
&=I_{-t<-s}\mathcal{F}^{-1}\left\{|\xi|^{\gamma}\exp\left(\int_{-t}^{-s}\psi(r,\xi)dr\right)\right\} \\
&=I_{t>s}\mathcal{F}^{-1}\left\{|\xi|^{\gamma}\exp\left(\int_{s}^{t}\psi(-r,\xi)dr\right)\right\}.
\end{align*}
Note that $\psi(-t,\xi)$ also satisfies Assumption \ref{as 7.8}. Define operator $\mathcal{P}$ by
$$
\mathcal{P}h(t,x):=\int_{\fR^{d+1}}P(t,s,x-y)h(s,y)dyds.
$$
Considering the change of variable $(s,t)\to(-s,-t)$,
we observe that by Fubini's theorem, for $f,g\in C_{c}^{\infty}(\fR^{d+1})$,
\begin{align*}
\int_{\fR^{d+1}}&g(s,y)\mathcal{G}f(s,y)~dyds
\\&=\int_{\fR^{d+1}}g(s,y)\left(\int_{\fR^{d+1}}K(s,t,y-x)f(t,x)~dxdt\right)dyds
\\&=\int_{\fR^{d+1}}f(-t,-x)\left(\int_{\fR^{d+1}}K(-s,-t,y)g(-s,y-x)~dyds\right)dxdt
\\&=\int_{\fR^{d+1}}f(-t,-x)\left(\int_{\fR^{d+1}}P(t,s,y)\tilde g(s,x-y)~dyds\right)dxdt
\\&=\int_{\fR^{d+1}}f(-t,-x)\mathcal{P}\tilde g(t,x)~dxdt
\end{align*}
where $\tilde g(t,x)=g(-t,-x)$. Then by H\"{o}lder inequality and the fact that $2<q<\infty$, we have
$$
\left|\int_{\fR^{d+1}}g(t,x)\mathcal{G}f(t,x)dxdt\right| \leq N\|f\|_{L_{p}}\|\mathcal{P} \tilde g\|_{L_{q}} \leq N\|f\|_{L_{p}}\|g\|_{L_{q}}.
$$
Since $g\in C^{\infty}_c(\fR^{d+1})$ is arbitrary, \eqref{befext}  is proved for $p\in (1,2)$.
Reminding \eqref{sol equal}, we obtain (\ref{7.9.1}) for $p \in (1,2)$. The theorem is proved.

\mysection{Applications}
              \label{appl}

For applications of Theorem \ref{pseudo thm} we introduce $2m$-order operator
$$
A_1(t)u:=(-1)^{m-1}\sum_{|\alpha|=|\beta|=m}a^{\alpha \beta}(t)D^{\alpha+\beta}u,
$$
and $\gamma$-order (nonlocal) operator
$$
 \quad A_2(t):=-a(t)(-\Delta)^{\gamma/2},
$$
where the coefficients $a^{\alpha \beta}(t)$ and $a(t)$ are bounded complex-valued measurable functions satisfying
$$\nu<\Re[a(t)]<\nu^{-1},
$$
and
\begin{align*}
\nu |\xi|^{2m} \leq  \sum_{|\alpha|=|\beta|=m}  \xi^\alpha \xi^\beta \Re\left[a^{\alpha \beta}(t)\right]  \leq \nu^{-1} |\xi|^{2m}, \quad \forall \xi\in \fR^d.
\end{align*}

\begin{corollary}         \label{maintheorem3}
Let $p > 1$. Then for any $u \in C_c^\infty(\fR^{d+1})$,
$$
\|u_t\|_{L_p(\fR^{d+1})}+\| (-\Delta)^{m}u\|_{L_p(\fR^{d+1})} \leq N \|u_t-A_1(t)u\|_{L_p(\fR^{d+1})},
$$
where $N$ depends only on $p,\nu, m$ and $d$.
\end{corollary}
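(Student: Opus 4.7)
The plan is to view Corollary \ref{maintheorem3} as a direct instantiation of Theorem \ref{pseudo thm} with $\gamma=2m$, so the only real work is to check that the symbol of $A_1(t)$ satisfies Assumption \ref{as 7.8}. First I would compute the symbol explicitly. Since $\cF(D^{\alpha+\beta}u)(\xi)=i^{|\alpha|+|\beta|}\xi^{\alpha+\beta}\hat u(\xi)=(-1)^m\xi^\alpha\xi^\beta\hat u(\xi)$, we get
$$
\psi(t,\xi)= (-1)^{m-1}(-1)^m\sum_{|\alpha|=|\beta|=m} a^{\alpha\beta}(t)\xi^{\alpha}\xi^{\beta}= -\sum_{|\alpha|=|\beta|=m} a^{\alpha\beta}(t)\xi^{\alpha}\xi^{\beta}.
$$

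Next I would verify the two bounds in \eqref{A1} with $\gamma=2m$. For the real part, the lower ellipticity assumption on the coefficients gives
$$
\Re[\psi(t,\xi)]=-\sum_{|\alpha|=|\beta|=m}\xi^{\alpha}\xi^{\beta}\Re[a^{\alpha\beta}(t)]\leq -\nu|\xi|^{2m}.
$$
For the derivative bound, observe that $\psi(t,\cdot)$ is a homogeneous polynomial of degree $2m$ in $\xi$ with coefficients bounded by $N(d,m,\nu)$; hence for any multi-index $\alpha$ with $|\alpha|\leq 2m$ one has $|D^\alpha\psi(t,\xi)|\leq N|\xi|^{2m-|\alpha|}$, while for $|\alpha|>2m$ the derivative vanishes identically. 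In particular \eqref{A1} holds for every multi-index with $|\alpha|\leq\lfloor d/2\rfloor+1$ (with a possibly enlarged constant $\nu^{-1}$ depending on $d,m,\nu$).

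Having confirmed Assumption \ref{as 7.8}, I would then invoke Theorem \ref{pseudo thm}(ii) with $\gamma=2m$, noting that $(-\Delta)^{\gamma/2}=(-\Delta)^{m}$, to obtain
$$
\|u_t\|_{L_p(\fR^{d+1})}+\|(-\Delta)^{m}u\|_{L_p(\fR^{d+1})}\leq N(p,\nu,m,d)\|u_t-A_1(t)u\|_{L_p(\fR^{d+1})}
$$
for all $u\in C^\infty_c(\fR^{d+1})$, which is the desired estimate. There is no serious obstacle here: the main check is the computation of $\psi(t,\xi)$ and the routine verification of the polynomial symbol bounds; once Theorem \ref{pseudo thm} is available, the corollary follows immediately.
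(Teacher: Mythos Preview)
Your proposal is correct and follows exactly the same route as the paper: compute the symbol $\psi(t,\xi)=-\sum_{|\alpha|=|\beta|=m}a^{\alpha\beta}(t)\xi^{\alpha}\xi^{\beta}$, observe that it satisfies \eqref{A1} with $\gamma=2m$, and invoke Theorem \ref{pseudo thm}. The paper's own proof is a two-line remark to this effect; you have simply written out the verification in more detail.
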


\begin{proof}
It is obvious that the symbol
$\psi(t,\xi)=-a^{\alpha \beta}(t) \xi^\alpha \xi^\beta$
satisfies  (\ref{A1}) with $\gamma=2m$ and any multi-index $\alpha$. Thus the corollary follows from Theorem \ref{pseudo thm}.
\end{proof}

\begin{corollary}         \label{maintheorem2}
Let $p > 1$.  Then for any $u \in C_c^\infty(\fR^{d+1})$,
$$
\|u_t\|_{L_p(\fR^{d+1})}+\| (-\Delta)^{\gamma/2}u\|_{L_p(\fR^{d+1})} \leq N \|u_t-A_2(t)u\|_{L_p(\fR^{d+1})},
$$
where $N$ depends only on $p,\nu, \gamma$ and $d$.
\end{corollary}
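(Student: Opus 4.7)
The plan is to deduce this corollary as a direct consequence of Theorem \ref{pseudo thm} applied to $A(t)=A_2(t)$. What needs to be done is simply to verify that the symbol of $A_2(t)$, namely
$$\psi(t,\xi) = -a(t)|\xi|^\gamma,$$
satisfies Assumption \ref{as 7.8} for some constant $\tilde\nu>0$ depending only on $\nu$, $\gamma$, and $d$.

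First I would check the real-part estimate: since $\Re[a(t)] \geq \nu$, we immediately get
$$\Re[\psi(t,\xi)] = -\Re[a(t)]\,|\xi|^\gamma \leq -\nu |\xi|^\gamma,$$
so the first half of \eqref{A1} holds with the same $\nu$. Next I would handle the derivative bound. Since $|a(t)| \leq \nu^{-1}$, the estimate $|D^\alpha \psi(t,\xi)| \leq \tilde\nu^{-1}|\xi|^{\gamma-|\alpha|}$ for any multi-index $|\alpha|\leq \lfloor d/2\rfloor +1$ reduces to the classical pointwise bound
$$|D^\alpha |\xi|^\gamma| \leq C(\alpha,\gamma,d) |\xi|^{\gamma-|\alpha|}, \quad \xi\in\fR^d\setminus\{0\},$$
which can be proved by induction on $|\alpha|$ starting from $D_i|\xi|^\gamma = \gamma \xi^i |\xi|^{\gamma-2}$ and using the Leibniz rule; each differentiation introduces a factor that is homogeneous of degree $-1$ away from the origin, so the scaling is preserved. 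Taking $\tilde\nu^{-1}:= \nu^{-1} \max_{|\alpha|\leq \lfloor d/2\rfloor+1}C(\alpha,\gamma,d)$ gives the second half of \eqref{A1}.

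With Assumption \ref{as 7.8} verified, Theorem \ref{pseudo thm}(ii) applied to $A_2(t)$ yields
$$\|u_t\|_{L_p(\fR^{d+1})}+\|(-\Delta)^{\gamma/2}u\|_{L_p(\fR^{d+1})}\leq N\|u_t - A_2(t)u\|_{L_p(\fR^{d+1})}$$
for all $u\in C_c^\infty(\fR^{d+1})$ and $p\in(1,\infty)$, with $N$ depending only on $p,\nu,\gamma,d$ (the dependence on the constants $C(\alpha,\gamma,d)$ collapses into dependence on $\gamma$ and $d$). There is no real obstacle here; the proof is simply a matter of verifying the symbol hypotheses, and the only mildly technical step is the elementary pointwise derivative bound on $|\xi|^\gamma$, which is standard.
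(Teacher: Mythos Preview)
Your proposal is correct and follows exactly the same approach as the paper: identify the symbol of $A_2(t)$ as $\psi(t,\xi)=-a(t)|\xi|^\gamma$, verify Assumption~\ref{as 7.8}, and invoke Theorem~\ref{pseudo thm}. The paper's own proof is a single sentence stating precisely this, so you have simply spelled out the (standard) details of the symbol verification that the authors left implicit.
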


\begin{proof}
The symbol related to the operator $A_2(t)$ is  $-a(t)|\xi|^{\gamma}$, and therefore the corollary follows from  Theorem \ref{pseudo thm}.
\end{proof}
Recall we defined $(-\Delta)^{\gamma/2}$ as the operator with symbol $|\xi|^{\gamma}$ for any $\gamma\in (0,\infty)$.
For further applications of Theorem \ref{pseudo thm}, we consider a product of $(-\Delta)^k$ and an integro-differential operator $\cL_0=\cL_{0,\gamma}$. We remark that in place of $(-\Delta)^k$ one can consider many other pseudo-differential or high order differential operators.

\vspace{2mm}

Fix $\gamma\in (0,2)$, and for $k=0,1,2,\cdots$ denote
\begin{align*}
 &\cL_{k}(t)u =(-\Delta)^k\cL_{0,\gamma} u\\
&:=\int_{\fR^d \setminus \{0\}} \Big( (-\Delta)^k u(t,x+y)- (-\Delta)^k u(t,x)-\chi(y)(\nabla (-\Delta)^k u (t,x),y) \Big)
\frac{m(t,y) }{|y|^{d+\gamma}}dy
\end{align*}
where $\chi(y)= I_{\gamma >1} + I_{|y|\leq1}I_{\gamma=1}$  and $m(t,y)\geq 0$ is a  measurable function satisfying  the following conditions :

(i) If $\gamma=1$ then
\begin{align}           \label{cancel}
\int_{\partial B_1} w m(t,w)~S_1(dw)=0, \quad \forall t >0,
\end{align}
where  $\partial B_{1}$ is the unit sphere in $\fR^d$ and $S_{1}(dw)$ is the surface measure on it.

(ii) The function $m=m(t,y) $ is zero-order homogeneous and differentiable in $y$ up to $d_0 = \lfloor\frac{d}{2}\rfloor+1$.

(iii) There is a constant $K$ such that for each  $t \in \fR$
$$
\sup_{|\alpha| \leq d_0, |y|=1} |D^\alpha_y m^{(\alpha)} (t,y) | \leq K.
$$
It turns out that the operator $\cL_k$ is a pseudo differential operator with  symbol
\begin{align*}
\psi(t,\xi) = - c_1 |\xi|^{2k}\int_{\partial B_{1}} |(w,\xi)|^\gamma [1-i\varphi^{(\gamma)}(w,\xi)] m(t,w)~S_{1}(dw),
\end{align*}
\begin{align*}
\varphi^{(\gamma)}(w,\xi)=c_2\frac{(w,\xi)}{|(w,\xi)|} I_{\gamma \neq 1}- \frac{2}{\pi} \frac{(w,\xi)}{|(w,\xi)|} \ln |(w,\xi)| I_{\gamma=1},
\end{align*}
and $c_1(\gamma,d)$, $c_2(\gamma,d)$ are certain positive constants.

(iv) There is a constant $N_{0}>0$ such that the symbol $\psi(t,\xi)$ of $\cL_k$ satisfies
\begin{equation}
             \label{eqn psi7}
\sup_{t,|\xi|=1} \Re[\psi(t,\xi)] \leq  -N_{0}.
\end{equation}
One can check that  (\ref{eqn psi7}) holds if there exists a constant $c>0$ so that $m(t,y)>c$ on a set  $E\subset \partial B_1$ of positive  $S_1(dw)$-measure.

\begin{corollary}         \label{maintheorem4}
Let $p> 1$ and $k=0,1,2,\cdots$. Then under above conditions (i)-(iv) on $m(t,y)$ it holds that  for any $u \in C_c^\infty(\fR^{d+1})$
$$
\|u_t\|_{L_p(\fR^{d+1})}+\| (-\Delta)^{\gamma/2+k}u\|_{L_p(\fR^{d+1})} \leq N \|u_t-\cL_k u\|_{L_p(\fR^{d+1})},
$$
where $N$ depends only on $p, \gamma, k,d, N_0$ and $K$.
\end{corollary}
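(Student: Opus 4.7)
The plan is to reduce the corollary to Theorem \ref{pseudo thm} applied with $\gamma$ in that theorem replaced by $\gamma+2k$. This reduction requires verifying Assumption \ref{as 7.8} for the symbol
\begin{equation*}
\psi(t,\xi) = -c_1 |\xi|^{2k} g(t,\xi), \qquad g(t,\xi) := \int_{\partial B_1} |(w,\xi)|^\gamma \bigl[1 - i\varphi^{(\gamma)}(w,\xi)\bigr] m(t,w)\,S_1(dw),
\end{equation*}
at order $\gamma+2k$; once this is done, Theorem \ref{pseudo thm}(ii) delivers the stated estimate at once.

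First I would establish that $\psi(t,\cdot)$ is homogeneous of degree $\gamma+2k$ in $\xi$. Scaling $\xi\mapsto\lambda\xi$ with $\lambda>0$ multiplies $|\xi|^{2k}$ by $\lambda^{2k}$ and, when $\gamma\neq 1$, multiplies $g(t,\xi)$ cleanly by $\lambda^{\gamma}$. In the borderline case $\gamma=1$, the logarithmic term in $\varphi^{(1)}$ contributes an extra $\tfrac{2i}{\pi}\lambda\ln\lambda\int_{\partial B_1}(w,\xi)m(t,w)\,S_1(dw)$, which vanishes thanks to the cancellation hypothesis (\ref{cancel}). Combining the homogeneity with condition (\ref{eqn psi7}) gives
\begin{equation*}
\Re[\psi(t,\xi)] \;=\; |\xi|^{\gamma+2k}\,\Re\bigl[\psi(t,\xi/|\xi|)\bigr] \;\leq\; -N_0\,|\xi|^{\gamma+2k},
\end{equation*}
which is the ellipticity half of Assumption \ref{as 7.8}.

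Next, the derivative bounds. By the Leibniz rule applied to $\psi=-c_1|\xi|^{2k}g(t,\xi)$, together with the polynomial estimate $|D^\beta|\xi|^{2k}|\leq N|\xi|^{2k-|\beta|}$, it suffices to prove
\begin{equation*}
|D^\alpha_\xi g(t,\xi)| \;\leq\; N|\xi|^{\gamma-|\alpha|},\qquad |\alpha|\leq d_0=\lfloor d/2\rfloor+1.
\end{equation*}
Since $g$ is homogeneous of degree $\gamma$ in $\xi$, this is equivalent to a uniform bound on $D^\alpha_\xi g(t,\cdot)$ over the unit sphere. Direct differentiation under the integral sign produces an integrand behaving like $|(w,\xi)|^{\gamma-|\alpha|}$ near the $(d-2)$-dimensional set $\{w\in\partial B_1:(w,\xi)=0\}$, which fails to be integrable once $|\alpha|>\gamma+1$. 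The remedy is to perform integration by parts on $\partial B_1$, transferring the singular $\xi$-derivatives onto tangential $w$-derivatives of $m(t,w)$; conditions (ii) and (iii), which give smoothness and uniform bounds on $m$ up to order $d_0$, are exactly what is needed for this procedure to close.

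With both clauses of Assumption \ref{as 7.8} verified, the conclusion follows immediately from Theorem \ref{pseudo thm}. The main obstacle is the third step: while homogeneity reduces everything to a problem on the unit sphere, the apparent singularity $|(w,\xi)|^{\gamma-|\alpha|}$ after $|\alpha|$-fold differentiation is far from integrable for large $|\alpha|$, and the entire argument hinges on exploiting the angular smoothness of $m$ through integration by parts on $\partial B_1$. This is precisely why hypothesis (ii)--(iii) postulates differentiability of $m$ up to order $\lfloor d/2\rfloor+1$, matching the regularity demanded by Assumption \ref{as 7.8}.
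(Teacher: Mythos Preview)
Your proposal is correct and follows essentially the same route as the paper: reduce to Theorem \ref{pseudo thm} with order $\gamma+2k$ by verifying Assumption \ref{as 7.8}, first establishing homogeneity of $\psi$ (with the cancellation condition \eqref{cancel} disposing of the logarithmic correction when $\gamma=1$) and then the derivative bounds. The only difference is that the paper outsources the estimate $|D^\alpha\tilde\psi(t,\xi)|\leq N|\xi|^{-|\alpha|}$ to a citation of \cite[Remark 2.6]{MP92}, whereas you sketch the underlying mechanism---integration by parts on $\partial B_1$ to trade the singular $\xi$-derivatives of $|(w,\xi)|^{\gamma}$ for tangential $w$-derivatives of $m$, which is exactly why hypotheses (ii)--(iii) demand $d_0$ derivatives of $m$.
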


\begin{proof}

 Note that for $\xi \neq 0$
 \begin{align*}
\psi(t, \xi)&
=|\xi|^{2k+\gamma}\psi\Big(t, \frac{\xi}{|\xi|} \Big)=: |\xi|^{2k+\gamma} \tilde \psi(t,\xi).
 \end{align*}
The above equality  is obvious if $\gamma \neq 1$, and if  $\gamma =1$ then by \eqref{cancel}
\begin{align*}
\psi(t, \xi)
&=|\xi|^{2k+1} \psi\Big(t, \frac{\xi}{|\xi|} \Big) + |\xi|^{2k}\ln |\xi| \int_{\partial B_1} (w,\xi) m(t,w)~S_1(dw) \\
&=|\xi|^{2k+1} \psi\Big(t, \frac{\xi}{|\xi|} \Big).
\end{align*}
By using condition (iii) one can check (see e.g. \cite[Remark 2.6]{MP92}) that for
 any multi-index $\alpha$, $|\alpha|\leq d_0$,  there exists a  constant $N=N(\alpha)$ such that
$$
|D^\alpha \tilde \psi(t,\xi)| \leq N|\xi|^{-|\alpha|}.
$$
Thus it is obvious that the given symbol $\psi$ satisfies (\ref{A1}). The corollary is proved.

\end{proof}

Next we discuss the issue regarding the compositions and powers of operators.
Let $B_1(t)$ and $B_2(t)$ be linear operators with symbols $\psi_1(t)$ and  $\psi_2(t)$ satisfying (\ref{A1}), that is there exist constants  $\gamma_1, \gamma_2, \nu_1, \nu_2>0$ so that
$$
      \Re [-\psi_i(t,\xi)]  \geq  \nu_i |\xi|^{\gamma_i},\quad
 |D^\alpha \psi_i(t,\xi)| \leq \nu_i^{-1} |\xi|^{\gamma_i -|\alpha|}, \quad (i=1,2),
 $$
 for any multi-index $\alpha$, $|\alpha|\leq \lfloor \frac{d}{2} \rfloor+1$. Fix $a,b>0$, and denote $\gamma:=a\gamma_1+b\gamma_2$. Consider $\gamma$-order  operator
 $$
 C(t)=-(-A_1(t))^a(-A_2(t))^b
 $$
 with the symbol $\psi=-(-\psi_1)^a(-\psi_2)^b$. It is easy to check that there exists a constant $N>0$ so that  for any multi-index $\alpha$, $|\alpha|\leq \lfloor \frac{d}{2} \rfloor+1$,
  $$
 |D^{\alpha} \psi (t,\xi)| \leq N |\psi|^{\gamma-\alpha}, \quad \quad  \xi \in \fR^d\setminus\{0\}.
$$
Therefore, Theorem \ref{pseudo thm} is applicable to the operator $C(t)=-(A_1(t))^a(A_2(t))^b$ if
 \begin{equation}
            \label{eqn 7.19}
  \Re [-\psi(t,\xi)]=\Re[(-\psi_1)^a(-\psi_2)^b]\geq N^{-1} |\xi|^{\gamma}, \quad \forall \, \xi\in \fR^d.
 \end{equation}
 Obviously (\ref{eqn 7.19}) is satisfied if, for instance, the symbols $\psi_i(t,\xi)$ are real-valued. In this case,  for any $u\in C^{\infty}_c(\fR^{d+1})$, we have
 $$
 \|u_t\|_{L_p(\fR^{d+1})}+\|(-\Delta)^{\gamma/2}u\|_{L_p(\fR^{d+1})}\leq N\|u_t-C(t)u\|_{L_p(\fR^{d+1})}.
 $$

\end{document}